\definecolor{darkblue}{rgb}{0.0,0.0,1.0}
\definecolor{darkgreen}{rgb}{0.1,0.6,0.0}
\definecolor{darkred}{rgb}{1.0,0.0,0.6}
\theoremstyle{plain}
\newtheorem{Theorem}{Theorem}[section]
\newtheorem{Lemma}[Theorem]{Lemma}
\newtheorem{Proposition}[Theorem]{Proposition}
\theoremstyle{definition}
\newtheorem{Definition}[Theorem]{Definition}
\newtheorem{Example}[Theorem]{Example}
\newtheorem{Remark}[Theorem]{Remark}
\numberwithin{equation}{section}
\newcommand{\eq}{\begin{equation}}
\newcommand{\eqa}{\begin{eqnarray}}
\newcommand{\en}{\end{equation}}
\newcommand{\ena}{\end{eqnarray}}
\newcommand{\enn}{\nonumber \end{equation}}
\def\sk{\vskip .4cm}
\def\noi{\noindent}
\def\de{\delta}
\def\epsi{{\varepsilon}}
\def\st {\star}
\def\f{{\rm{f}\,}}
\def\of{{\overline{{\rm{f}}\,}}}
\def\D/h{\widehat{\fmslash D}}
\def\al{\alpha}
\def\be{\beta}
\def\ga{\gamma}
\def\de{\delta}
\def\5bar{{\overline 5}}
\def\MMM{{\mathscr M}^{}}
\def\RR{{\mathcal R}}
\newcommand{\MMMod}[3]{{}^{#1}{}_{#2}\MMM{\!}_{#3}}
\newcommand{\AAAlg}[3]{{}^{#1}{}_{#2}\AAA{\!}_{#3} }
\def\R{{R}}
\def\oR{{\overline{\R}}}
\def\gg{{\hat g}}
\def\UU{H}
\def\FF{\mathcal F}
\def\varepsi{\varepsilon}
\def\s'O{\stackrel{_{{\displaystyle\st \footnotesize '}}}{_{^{^{\displaystyle\otimes}}}}}
\def\v{\chi}
\def\AAs{{A_\st }}
\def\D{\Delta}
\def\1s{{1_\st }}
\def\3s{{3_\st }}
\def\2s{{2_\st }}
\def\ef1{{1_\FF}}
\def\ef2{{3_\FF}}
\def\ef3{{2_\FF}}
\def\hbar{\lambda}
\def\le{\langle}
\def\re{\rangle}
\def\nn{\nonumber}
\def\bbC{\mathbb{C}}
\def\bbK{\mathbb{K}}
\def\bfK{\mathbb{K}}
\def\bbA{\mathbb A}
\def\AA{A}
\def\AAA{\mathscr A}
\def\dd{{\nabla}}
\def\dif{{\mathrm d}}
\def\gg{\mathfrak{g}}
\def\trgl{\triangleright}
\def\btrgl{\blacktriangleright}
\def\Hom{\text{\sl Hom}}
\def\Con{\text{\sl Con}}
\def\End{\text{\sl End}}
\def\ra{\triangleright}
\def\RA{\blacktriangleright}
\def\pP{P}
\def\qQ{Q}
\def\id{\mathrm{id}}
\begin{document}
\thispagestyle{empty}
\begin{flushright}
\small{\rightline{CERN-PH-TH/2012-064}}
\end{flushright}

\boldmath
\begin{center}
{\Large {\bf 
Twisting all the way: \\[0.4em] from algebras to morphisms and
connections\footnote{Paper based on joint work with Alexander Schenkel}}}
\sk\sk
{\bf Paolo Aschieri}\\[0.2em]
{\sl Physics Department, Theory Unit, CERN, CH 1211, Geneva 23, Switzerland
\\and\\
 Dipartimento di Scienze e Innovazioni Tecnologiche, \\
Universit\`a del Piemonte Orientale and INFN gruppo collegato di
Alessandria, \\Viale T. Michel 11, I-15121, Alessandria, Italy}\end{center}

\vspace{-0.4cm}
\unboldmath
\vspace{0.8cm}
\begin{abstract}
Given a Hopf algebra $\UU$ and an algebra $\AA$ that is an $\UU$-module
algebra we consider the category of left $\UU$-modules
and $\AA$-bimodules $\MMMod{H\!\!\!}{A}{A}$, where morphisms are just right $A$-linear maps
(not necessarily $H$-equivariant). Given a twist $\FF$ of $\UU$ we
then quantize (deform) $\UU$ to
$\UU^\FF$, $A$ to $A_\st$ and correspondingly the category $\MMMod{H\!\!\!}{A}{A}$ to
$\MMMod{H^\FF\!\!\!\!}{A_\st}{A_\st}$.
If we consider a quasitriangular  Hopf algebra
$\UU$, a quasi-commutative algebra $A$ and quasi-commutative
$A$-bimodules, we can further
construct and study tensor products over $A$ of modules and of morphisms, and
their twist quantization.

This study leads to the definition of {arbitrary} (i.e., not necessarily
$H$-equivariant) connections on quasi-commutative $A$-bimodules,
to extend these connections to tensor product modules and to quantize them to $A_\st$-bimodule connections. 
Their curvatures and those on 
tensor product  modules are also determined.
\end{abstract}
\sk\sk\sk\small{\noi keywords: 
Noncommutative Geometry,  Drinfeld Twist, bimodule connections, Universal deformation fomula}

\paragraph*{MSC 2010:}46L87,  17B37,   53D55,     81R60

\tableofcontents



\section{Introduction}

Consider the basic algebraic structures underlying the differential geometry of a
manifold $M$:
the algebra $A=C^\infty(M)$ of complex valued functions on $M$;
the $A$-module of sections of the tangent bundle 
(i.e., vector fields),  and that  of one-forms;
the algebra of tensor fields $({\cal T},\otimes_A)$ and the exterior
algebra $(\Omega^\bullet, \wedge)$. 
Typical maps between these algebraic structures are 
the exterior derivative, connections, and tensor fields (like a
metric tensor, a curvature tensor, etc.). 
The Lie algebra of vector fields (infinitesimal local diffeomorphisms
of $M$) acts on all the above structures. The universal
enveloping algebra of the Lie algebra of vector fields is a
Hopf algebra $H$ and it also acts on all the above structures.

In \cite{Aschieri:2005yw, Aschieri:2005zs} we have deformed the Hopf algebra 
$H$ of vector fields on a manifold $M$ via a Drinfeld twist 
(or twisting element) $\FF\in H\otimes H$ \cite{Drinfeld:1989st}. 
Since the $A$-modules of vector fields and of one-forms, and 
the  tensor and exterior
algebras $({\cal T},\otimes_A)$ and  $(\Omega^\bullet, \wedge)$ carry a representation of 
the algebra $H$ we have been able to deform these algebraic structures as well.
Concerning morphisms, in \cite{Aschieri:2005yw, Aschieri:2005zs} we study in particular the
deformations of the Lie derivative and inner derivative. 
Since $H$ is the Hopf algebra associated to the Lie algebra of vector
fields, interesting morphisms are not in general $H$-equivariant
(they indeed transform covariantly).  
This deformed geometry has then been used to formulate a noncommutative
gravity theory.  Notice that the noncommutative connection
and metric tensor considered in \cite{Aschieri:2005yw, Aschieri:2005zs} cannot be
$H$-equivariant because they are dynamical fields.
  \sk
In the present paper, outlining joint work with Alexander Schenkel \cite{AS}, 
we clarify and further study the twist quantization or
twist deformation scheme. 
Even if our leading example is the twist deformation of the algebraic
structures of a manifold $M$, we here more in general consider a Hopf
algebra $H$ and  $A$-modules where $H$ is not necessarily
cocommutative like the universal enveloping algebra of a Lie
algebra (for example it can be a quantum group)
and $A$ is not necessarily commutative.

More precisely in this paper  we study  the category of $A$-modules carrying an
action of a Hopf algebra $H$. The obvious morphisms in this category are
$A$-module morphisms that are also $H$-equivariant ($H$-module
morphisms).  As we have seen, this choice is too restrictive
and we therefore study the category $\MMMod{H\!\!\!}{}{A}$ of right $A$-modules
that carry also an $H$-action, but where morphisms are just right $A$-linear maps. 
We will also study just linear (not $A$-linear) maps, this is
propaedeutical and it is also needed in order to understand deformations of connections.
We further study the category  $\MMMod{H\!\!\!}{A}{A}$ of left and right $A$-modules ($A$-bimodules)
so that we can consider the tensor product over $A$ of $A$-bimodules
(the algebraic analogue of tensor product of vector bundles).
In the category $\MMMod{H\!\!\!}{A}{A}$ morphisms are just right
$A$-module morphisms; their tensor product
will
be defined for a subclass of noncommutative algebras $A$ and
$A$-bimodules: quasi-commutative algebras and  
bimodules carrying an action of a quasitriangular Hopf algebra
$H$. Examples of such modules with a triangular Hopf algebra action
naturally arise when considering twist deformations of commutative
algebras and cocommutative Hopf algebras.

All these structures can be deformed  via a Drinfeld twist $\FF\in H\otimes H$
that in particular acts nontrivially on morphisms. If the algebra $A$
is commutative then the twist deformed algebra $A_\st$ is usually
noncommutative, it is a quantization of $A$. We call 
twist quantization or simply  quantization the twist deformation of the
algebra $A$ to $A_\st$, and of the category $\MMMod{H\!\!\!}{A}{A}$ to
$\MMMod{H^\FF\!\!\!}{A_\st}{A_\st}$, also when $A$ is not commutative.

Having understood the properties of linear and $A$-linear morphisms we can
then present our theory of  connections. These are just right
$A$-module connections. Quasitriangularity of the Hopf algebra $H$ and
quasi-commutativity of  the $A$-bimodules then imply that these
connections are also quasi-left $A$-module connections. 
These connections are more general than the ones studied
in the literature on $A$-bimodules \cite{Mourad:1994xa, DuboisViolette:1995hh, Madore:2000aq}.
In the restrictive case they are also $H$-equivariant then
they match 
the definition in \cite{Mourad:1994xa, DuboisViolette:1995hh, Madore:2000aq}.

Given two connections on two quasi-commutative $A$-bimodules $V$ and
$W$ we  next define their sum as the connection on the tensor product
module $V\otimes_A W$ (physically this is relevant for example when considering the covariant
derivative of composite fields). Again this differs from the sum of
bimodule connections discussed in the literature
\cite{Bresser:1995gk, Madore:2000aq, DuboisViolette:1999cj}.
We further  study the
corresponding quantized connections as well as their curvatures. 
Twist deformation acts nontrivially on these structures, for example
flat connections are twisted in non flat ones and vice versa.


\section{Twisting algebras, modules and morphisms}
\subsection{Hopf algebras, modules, twists; twisted algebras and modules}
We start settling the notation and recalling the structures we will
later deform.
In deformation quantization the field of complex numbers
$\bbC$ is replaced by the ring $\bbC[[h]]$ of formal power series 
(in an indeterminate, say $h$) with
coefficients in $\bbC$.  In order to cover also this example (that is
rich of twists, an example being the Moyal-Weyl twist) we
consider modules and algebras over a commutative ring $\bbK$ with unit element $1\in\bbK$. 
Then vector spaces over $\bbC$ are replaced by $\bbK$-modules, and
$\bbC$-linear maps are replaced by {$\bbK$-linear maps} ({$\bbK$-module morphisms}).

We recall that a Hopf algebra $H$ is an algebra (over $\bbK$) with
multiplication $\mu: H\otimes H\rightarrow H$ and two algebra morphisms $\Delta:\UU\to\UU\otimes\UU$ (coproduct), $\varepsilon:\UU\to\bbK$ (counit) and a
$\bbK$-linear map $S:\UU\to\UU$ (antipode) satisfying,
for all $\xi\in\UU$,
$(\Delta\otimes\id)\Delta(\xi)= (\id\otimes\Delta)\Delta(\xi)$ and
\eq
(\varepsilon\otimes\id)\Delta(\xi)=(\id\otimes \varepsilon)\Delta(\xi) = \xi~~,~~~
\mu\bigl((S\otimes\id)\Delta(\xi)\bigr) = \mu\bigl((\id\otimes S)\Delta(\xi)\bigr)=\varepsilon(\xi) 1~,
\en
where $1$ is the unit in $H$.
As usual we use Sweedler's notation for the coproduct, for all $\xi\in\UU$,
$\Delta(\xi)=\xi_{1}\otimes\xi_{2}$ (sum understood), similarly 
$(\Delta\otimes \id)\Delta(\xi)=(\id\otimes
\Delta)\Delta(\xi)=\xi_{1}\otimes\xi_{2}\otimes \xi_3$.

\sk
A left module $V$ over an algebra $\AA$ (or {\bf left
$\AA$-module}) is a $\bbK$-module $V$ with a 
$\bbK$-linear map $\cdot:\AA\otimes V\to V$ satisfying, for all $a,a'\in\AA$ and $v\in V$,
\begin{flalign}
(a\, a') \cdot v = a\cdot(a'\cdot v)~,~~1\cdot v = v~.
\end{flalign} 
The map $\cdot:\AA\otimes V\to V$ is called an action of
$A$ on $V$ or a representation of $A$ on $V$.
We denote the category of  left $\AA$-modules  by $\MMMod{}{A}{}$.
Morphisms in this category are left $A$-linear maps. Similarly $\MMMod{}{}{A}$
denotes the category of right $A$-modules.
A left $A$-module and a right $A$-module structure on $V$ are compatible if left and
right $A$-actions commute,  
for all $a, a'\in A$, and $v\in V$,
$(a\cdot v)\cdot a' =a\cdot (v\cdot a')~.$
Then $V$ is an $A$-bimodule. We denote the corresponding category  by $\MMMod{}{A}{A}$.
\sk
A {\bf left $\UU$-module algebra} is an algebra $\AA$ which is also a left $\UU$-module (where
the left action is denoted by $\trgl$),
such that for all $\xi\in\UU$ and $a,b\in\AA$,
\begin{flalign}
\xi \trgl(a\,b)=(\xi_1\trgl a)\,(\xi_2\trgl b)~,~~\xi\trgl1 =\varepsilon(\xi)\,1~.
\label{coproductonab}\end{flalign}
We define $\AAAlg{H\!\!\!}{}{}$ to be the category of $\UU$-module
algebras where morphisms are algebra morphisms $\rho: A\to \tilde A$
that are {\it not} necessarily also $H$-module
morphisms (i.e. $H$-equivariant):
for all $\xi\in H,\, a\in A$, $\rho(\xi\trgl a)=\xi\,\tilde\trgl\, \rho(a)$.

We can now consider $A$-bimodules $V$, where $A\in \AAAlg{H\!\!}{}{}$ and $V$ is
also a left $H$-module. Compatibility between the Hopf algebra structure
of $H$ and the $A$-bimodule structure of $V$  leads to the following
covariance requirement:
\begin{Definition}
A {\bf left $\UU$-module $A$-bimodule $V$} is an $A$-bimodule $V$ over $\AA\in \AAAlg{H\!\!}{}{}$
which is also a left $\UU$-module, such that
for all $\xi\in\UU$, $a\in A$ and $v\in V$,
\eq\label{eqn:moduleHcovariance}
 \xi\trgl(a\cdot v) = (\xi_1\trgl a)\cdot (\xi_2\trgl v)~,~~
 \xi\trgl(v\cdot a) = (\xi_1\trgl v)\cdot (\xi_2\trgl a)~.
\en
The category of left $\UU$-module $\AA$-bimodules is denoted by $\MMMod{H\!\!\!}{A}{A}$.
We define morphisms in this category to be right $A$-module
morphisms but not necessarily $H$-module morphisms or left $A$-module morphisms.
The subcategory of $\MMMod{H,\!\!\!}{A}{A}$ given by modules that are
also $H$-module algebras, and morphisms that are algebra morphisms is
denoted by  $\AAAlg{H\!\!}{A}{A}$.
\end{Definition}

In commutative differential geometry, as discussed in the
introduction, we encounter all these structures. For example we have
that the bimodule of one forms $\Omega$ on a manifold $M$ is an
element in the category
$\MMMod{U\Xi}{C^\infty(M)}{C^\infty(M)}$,
where $U\Xi$ is the universal enveloping algebra of the Lie algebra of
vector fields and their  action is given by the Lie derivative.

\sk
Following Drinfeld \cite{Drinfeld:1989st} and Giaquinto and Zhang \cite{Giaquinto:1994jx} 
we now  deform these modules and algebras via a twist 
\begin{Definition}\label{twistdeff}
A {\bf twist }
$\FF\in \UU\otimes \UU$ of a Hopf algebra $H$ is an invertible element that satisfies
\begin{subequations}
\label{propF}
\begin{flalign}
\label{propF1}
\FF_{12}(\Delta\otimes \id)\FF &=\FF_{23}(\id\otimes \Delta)\FF~,\quad
\text{($_{\,}2$-cocycle property)}\\
\label{propF2}
(\varepsilon\otimes \id)\FF &=1=(\id\otimes \varepsilon)\FF~,\quad 
\text{(normalization property)}
\end{flalign}
\end{subequations}
where $\FF_{12}=\FF\otimes 1$ and $\FF_{23}=1\otimes \FF$.
\end{Definition}
We shall frequently use the notation (sum over $\al$ understood)
\eq\label{Fff}
\FF=\f^\al\otimes\f_\al~~~,~~~~\FF^{-1}=\of^\al\otimes\of_\al~.
\en
In order to get familiar with this notation we rewrite
the inverse of the $2$-cocycle condition (\ref{propF1}), 
$
((\Delta\otimes\id)\FF^{-1}) 
\FF^{-1}_{12} =((\id \otimes \Delta)\FF^{-1})\FF^{-1}_{23},
$
in this notation,
\begin{equation}
\label{ass}
\of_{_1}^\al\of^\be\otimes \of_{_2}^\al\of_\be\otimes \of_\al=
\of^\al\otimes {\of_{\al_1}}\of^\be\otimes {\of_{\al_2}}\of_\be~.
\end{equation}
In deformation quantization the twist is $\FF=1\otimes1
+hF_1+h^2F_2+...$ where $F_i\in H\otimes H$, and leads to a formal
deformation quantization in the parameter $h$, see Theorem \ref{Theorem1}. 

\begin{Theorem}\label{TwistedHopfAlg}
The twist ${\cal F}$ of the Hopf algebra $\UU$ leads to a new 
 Hopf algebra $\UU^\FF$,  given by
\begin{flalign}
(\UU ,\mu, \Delta^{\cal F}, S^{\cal F}, \varepsilon)~.
\end{flalign}
As algebras $\UU^\FF =\UU$ and they also have the same counit 
$\varepsilon^\FF=\varepsi$. 
The new coproduct $\Delta^\FF$ is given by, for all $\xi \in \UU$,
$
\Delta^{\FF}(\xi) = {\FF}\Delta(\xi){\FF}^{-1}\,.
$
The new antipode is
$
S^\FF(\xi)=\chi S(\xi)\chi^{-1}~,
$
where
$\chi := \f^\al S(\f_\al) \,,~\chi^{-1} = S(\of^\al) \of_\al\,.$
\end{Theorem}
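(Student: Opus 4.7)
The plan is to verify the Hopf algebra axioms for $(\UU,\mu,\Delta^\FF,S^\FF,\varepsi)$ in turn, noting that the algebra structure and counit are unchanged, so only coassociativity, the counit law for $\Delta^\FF$, and the antipode axiom for $S^\FF$ need attention.

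First, $\Delta^\FF$ is an algebra morphism because $\Delta$ is one and conjugation by the invertible element $\FF\in \UU\otimes \UU$ is an algebra automorphism of $\UU\otimes \UU$ (whose multiplication is componentwise). Coassociativity is then a direct calculation: both $(\Delta^\FF\otimes\id)\Delta^\FF(\xi)$ and $(\id\otimes\Delta^\FF)\Delta^\FF(\xi)$ unfold into triple products whose outer factors are $\FF_{12}(\Delta\otimes\id)\FF$ respectively $\FF_{23}(\id\otimes\Delta)\FF$ (together with their inverses on the right), conjugating $(\Delta\otimes\id)\Delta(\xi)=(\id\otimes\Delta)\Delta(\xi)$. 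Hence coassociativity of $\Delta^\FF$ reduces precisely to the $2$-cocycle condition (\ref{propF1}). The counit law follows immediately by applying $\varepsi\otimes\id$ and $\id\otimes\varepsi$ to $\FF\Delta(\xi)\FF^{-1}$ and invoking the normalization property (\ref{propF2}) together with counitality of $\Delta$.

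The main obstacle is the antipode axiom, which I would tackle in two stages. First, I would show that $\chi=\f^\al S(\f_\al)$ is invertible with the stated inverse $\chi^{-1}=S(\of^\al)\of_\al$: apply $\mu\circ(S\otimes\id)$ to the inverse $2$-cocycle identity (\ref{ass}), and symmetrically $\mu\circ(\id\otimes S)$ to (\ref{propF1}), then use the antipode axioms $\mu\circ(S\otimes\id)\Delta=\varepsi\,1=\mu\circ(\id\otimes S)\Delta$ to collapse the middle tensor factor, leaving the relations $\chi^{-1}\chi=1$ and $\chi\chi^{-1}=1$. With $\chi^{\pm1}$ thus established, I would verify $\mu\circ(S^\FF\otimes\id)\circ\Delta^\FF(\xi)=\varepsi(\xi)\,1$ by expanding the expression $\chi\,S(\f^\al\,\xi_1\,\of^\be)\,\chi^{-1}\,\f_\al\,\xi_2\,\of_\be$, using that $S$ is an anti-algebra map, substituting the explicit forms of $\chi$ and $\chi^{-1}$, and rearranging via the $2$-cocycle condition so that a factor $\mu\circ(S\otimes\id)\Delta(\xi)=\varepsi(\xi)\,1$ emerges; the remaining factor is then $\chi\chi^{-1}=1$. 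The other antipode axiom $\mu\circ(\id\otimes S^\FF)\circ\Delta^\FF=\varepsi\,1$ is handled by a symmetric computation.

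The bookkeeping of Sweedler legs through $\FF$ and $\FF^{-1}$ in these last steps is where the calculation gets intricate; the crucial point is to apply the cocycle identity at exactly the moment where it brings the $\xi$-factor into a shape on which the untwisted antipode axiom of $\UU$ can be invoked, and to organise the remaining $\f^\al,\f_\al,\of^\al,\of_\al$ so that they assemble into $\chi\chi^{-1}$ rather than a more complicated expression. Everything else is routine from the Hopf algebra axioms for $\UU$ and the twist properties (\ref{propF1})--(\ref{propF2}).
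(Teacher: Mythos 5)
The paper itself states Theorem \ref{TwistedHopfAlg} without proof (it is Drinfeld's classical result, quoted from \cite{Drinfeld:1989st}), so there is no in-paper argument to compare against. Your strategy is the standard direct verification, and the bialgebra part is complete and correct as sketched: multiplicativity of $\Delta^\FF$ from conjugation by an invertible element, coassociativity reducing to the $2$-cocycle condition (\ref{propF1}), and the counit law from the normalization (\ref{propF2}).

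The gap is in your recipe for the invertibility of $\chi$. Applying $\mu\circ(S\otimes\id)$, extended over the three tensor legs of (\ref{ass}) in any of the natural ways (say $x\otimes y\otimes z\mapsto S(x)\,y\,z$ or $x\otimes y\otimes z\mapsto x\,S(y)\,z$), does \emph{not} leave $\chi^{-1}\chi=1$: the antipode axiom of $\UU$ collapses the Sweedler legs on only one side of (\ref{ass}), because $\Delta(\of^\al)$ occupies legs $1,2$ on the left-hand side while $\Delta(\of_\al)$ occupies legs $2,3$ on the right-hand side, and the same map must be applied to both sides. For instance $x\otimes y\otimes z\mapsto x\,S(y)\,z$ gives $\chi^{-1}$ from the right-hand side but the uncollapsed expression $\of^\al_1\,\bigl(\of^\be S(\of_\be)\bigr)\,S(\of^\al_2)\,\of_\al$ from the left-hand side, which is an identity but not the one you need. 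The working argument first rearranges the cocycle condition into $\FF_{23}^{-1}\FF_{12}=\bigl((\id\otimes\Delta)\FF\bigr)\bigl((\Delta\otimes\id)\FF^{-1}\bigr)$ and $\FF_{12}^{-1}\FF_{23}=\bigl((\Delta\otimes\id)\FF\bigr)\bigl((\id\otimes\Delta)\FF^{-1}\bigr)$; applying $x\otimes y\otimes z\mapsto x\,S(y)\,z$ to the first and $x\otimes y\otimes z\mapsto S(x)\,y\,S(z)$ to the second, \emph{both} sides now collapse (the left-hand sides to $\chi\chi^{-1}$ and $\chi^{-1}\chi$, the right-hand sides to $1$ via $\of^\be_1S(\of^\be_2)=\varepsi(\of^\be)1$, $S(\f_{\al_1})\f_{\al_2}=\varepsi(\f_\al)1$ and normalization). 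Once this is in place your second stage goes through, in fact more easily than you describe: in $\chi\, S(\of^\be)S(\xi_1)S(\f^\al)\chi^{-1}\f_\al\,\xi_2\,\of_\be$ the middle block is $S(\f^\al)\chi^{-1}\f_\al=\mu\circ(S\otimes\id)(\FF^{-1}\FF)=1$, which needs only invertibility of $\FF$ and not the cocycle condition; then $S(\xi_1)\xi_2=\varepsi(\xi)1$ and the leftover factor is exactly $\chi\chi^{-1}=1$.
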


\begin{Theorem}\label{Theorem1}
Given a Hopf algebra $H$, 
a twist $\FF\in H\otimes H$ and  a  left $H$-module algebra $\AA$ (not
necessarily associative or with unit), then there exists
a deformed  left $H^\FF$-module algebra $\AA_\st$. 
The algebra $\AA_\st$ has the same $\bbK$-module
structure as $\AA$ and
the action of $H^\FF$ on $\AA_\st$
is that of $H$ on $\AA$.
The product in $\AA_\st $ is defined by, for all $a,b\in A$,
\eq
a\st  b :=\mu\circ \FF^{-1}\trgl(a\otimes b)=(\of^\al\trgl
a)\,(\of_\al\trgl b)~.
\en
If $\AA$ has a unit element then $\AAs$ has the same unit element. If 
$\AA$ is associative then $\AA_\st $ is an associative algebra as well.
\end{Theorem}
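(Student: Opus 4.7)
The plan is to verify the three assertions (module algebra property, unit, associativity) by direct computation, leveraging the Hopf-algebraic definition $a \st b = \mu \circ \FF^{-1}\trgl (a \otimes b)$ together with the twist axioms of Definition \ref{twistdeff}.

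First I would establish that $A_\st$ is a left $H^\FF$-module algebra, i.e.\ that
\eq
\xi \trgl (a \st b) = (\xi_{1}^\FF \trgl a) \st (\xi_{2}^\FF \trgl b)\,,\qquad \xi\trgl 1 = \varepsilon(\xi) 1\,, \nonumber
\en
where $\Delta^\FF(\xi) = \xi_1^\FF \otimes \xi_2^\FF = \FF\Delta(\xi)\FF^{-1}$. The second equality is inherited from the original $H$-action since $H^\FF$ and $H$ share the same counit. For the first, I would compute $\xi \trgl (a\st b) = \xi\trgl \mu(\FF^{-1}\trgl(a\otimes b))$ and use that $A$ is an $H$-module algebra, which in Sweedler form reads $\xi\trgl \mu(x\otimes y) = \mu(\Delta(\xi)\trgl (x\otimes y))$. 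Inserting $\FF^{-1}\FF = 1$ on the right-hand side then produces exactly $\mu(\FF^{-1}\Delta^\FF(\xi)\trgl(a\otimes b)) = (\xi_1^\FF \trgl a)\st (\xi_2^\FF \trgl b)$.

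Next I would handle the unit. Using the notation (\ref{Fff}),
$1\st a = (\of^\al \trgl 1)(\of_\al \trgl a) = \varepsilon(\of^\al)(\of_\al \trgl a)$ by the $H$-module algebra axiom (\ref{coproductonab}). The counit identity $(\varepsilon\otimes\id)\FF^{-1}=1$, which follows immediately from the normalization property (\ref{propF2}) of $\FF$, then collapses this to $a$; the symmetric computation yields $a\st 1=a$.

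For associativity, I would expand $(a\st b)\st c$ and $a\st (b\st c)$ using the definition of $\st$ and the module algebra property to push the outer $\of$'s through the inner product. This gives
\eq
(a\st b)\st c = \big((\of^\be_{1}\of^\al)\trgl a\big)\big((\of^\be_{2}\of_\al)\trgl b\big)(\of_\be \trgl c)\,,\nonumber
\en
and analogously
\eq
a\st(b\st c) = (\of^\al \trgl a)\big((\of_{\al_1}\of^\be)\trgl b\big)\big((\of_{\al_2}\of_\be)\trgl c\big)\,.\nonumber
\en
Equality of the two expressions now reduces precisely to the $2$-cocycle identity in the form (\ref{ass}), together with associativity of the original product $\mu$ on $A$. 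The main (purely bookkeeping) obstacle is keeping the Sweedler indices of $\Delta$ applied to the legs of $\FF^{-1}$ aligned; once the inverse cocycle relation is written in the $\of$-notation as in (\ref{ass}), the calculation closes directly. No new input beyond the twist axioms, the module algebra axiom, and associativity of $A$ is required, so the proof is complete once these three verifications are performed.
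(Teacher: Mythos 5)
Your proposal is correct and follows essentially the same route as the paper's proof: the $H^\FF$-module algebra property via inserting $\FF^{-1}\FF$ to pass from $\Delta(\xi)\FF^{-1}$ to $\FF^{-1}\Delta^\FF(\xi)$, the unit via the normalization property, and associativity via the inverse $2$-cocycle condition in the form (\ref{ass}). Your explicit Sweedler-index expansions match the paper's computation up to a relabelling of dummy indices.
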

\begin{proof}
We have to prove  that the product in $\AA_\st$ 
is compatible with the Hopf algebra structure on $H^\FF$, for all $a,b\in A$ and $\xi\in H$,
\eqa
\xi\trgl(a\st b)&=&\xi\trgl\bigl(\mu\circ \FF^{-1}\trgl (a\otimes b)\bigr)\nn
=
\mu\circ \D(\xi)\trgl \circ \,\FF^{-1}\trgl (a\otimes b)\nn
=
\mu\circ(\D(\xi)\, \FF^{-1})\trgl  (a\otimes b)\\
&=&\mu\circ\FF^{-1}\trgl\circ\D^\FF(\xi)\trgl(a\otimes b)
=(\xi_{1_\FF}\trgl a)\st(\xi_{2_\FF}\trgl b)~,\label{ccc}
\ena
where we used the notation $\Delta^\FF(\xi)=\xi_{1_\FF}\otimes \xi_{2_\FF}$.

If $\AA$ has a unit element $1$, then  $1\st a=a\st1=a$ follows from the
normalization property (\ref{propF2}) of the twist $\FF$.
If $\AA$ is an associative algebra we also have to prove
associativity of the new product, for all $a,b,c\in A$,
\begin{flalign}
\nn (a\st b)\st c&=\of^\al\trgl \bigl((\of^\be\trgl a)(\of_\be\trgl
b)\bigr)\;(\of_\al\trgl c)\\
&=
(\of^\al\trgl a)\,\of_\al\trgl \bigl((\of^\be\trgl b)(\of_\be\trgl
c)\bigr)=a\st (b\st c)~,
\end{flalign}
where we used the 2-cocycle property  (\ref{propF1})  of the twist in the notation of (\ref{ass}).
\end{proof}

\begin{Theorem}\label{Theorem2}
In the hypotheses of Theorem \ref{Theorem1}, given a left $\UU$-module
$\AA$-bimodule $V\in \MMMod{H\!\!\!}{A}{A}$, then there exists
a left $\UU^\FF$-module $\AA_\st$-bimodule $V_\st\in \MMMod{H^\FF\!\!\!\!}{A_\st}{A_\st}$.
The module $V_\st$ has the same $\bbK$-module
structure as $V$ and the left action of $H^\FF$ on $V_\st$
is that of $H$ on $V$. The $A_\st$ actions on $V_\st$
are respectively defined by, for all $a\in A$ and $v\in V$,
\begin{subequations}
\label{modst}
\begin{flalign}
a\st  v &=\cdot\circ \FF^{-1}\trgl(a\otimes v)=(\of^\al\trgl
a)\cdot (\of_\al\trgl v)~, \label{lmodst}\\
v\st  a &=\cdot\circ \FF^{-1}\trgl(v\otimes a)=(\of^\al\trgl
v)\cdot(\of_\al\trgl a)~. \label{rmodst}
\end{flalign}
 \end{subequations}
If $V$ is further a left $H$-module $A$-bimodule algebra 
$V=E\in  \AAAlg{H\!\!}{A}{A}$, then $E_\st\in   \AAAlg{H^\FF\!\!\!\!}{A_\st}{A_\st}$,
where the $\st$-product in the algebra $E_\st$ is given in Theorem \ref{Theorem1}.
\end{Theorem}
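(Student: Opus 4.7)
The plan is to mimic the proof of Theorem \ref{Theorem1} three times over: once for the left $A_\st$-action, once for the right $A_\st$-action, and once for the compatibility between the two, then to verify $H^\FF$-covariance, and finally to deduce the algebra statement as a direct corollary.

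First I would verify that \eqref{lmodst} defines a left $A_\st$-module structure on $V_\st$. Unitality $1\st v = v$ is immediate from the normalization property \eqref{propF2}. Associativity $(a\st b)\st v = a\st(b\st v)$ is a verbatim repetition of the associativity computation in Theorem \ref{Theorem1}: expand both sides using $\FF^{-1}$, use $H$-covariance of the left action $\cdot\colon A\otimes V\to V$ (which is \eqref{eqn:moduleHcovariance}), and then apply the inverse 2-cocycle identity \eqref{ass}. The right $A_\st$-action from \eqref{rmodst} is handled in the same way, using the second identity of \eqref{eqn:moduleHcovariance}. The compatibility of left and right twisted actions, $(a\st v)\st b = a\st (v\st b)$, follows by the same maneuver applied to the bimodule compatibility $(a\cdot v)\cdot b = a\cdot(v\cdot b)$ of the undeformed $V$: after applying $\FF^{-1}$ twice on both sides and using the $H$-covariance of each one-sided action, the rearrangement reduces to \eqref{ass} exactly as in the associativity proof.

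Next I would establish $H^\FF$-covariance. For the left action we compute, in parallel to \eqref{ccc},
\begin{flalign*}
\xi\trgl(a\st v) &= \xi\trgl\bigl(\cdot\circ \FF^{-1}\trgl(a\otimes v)\bigr)
= \cdot\circ \Delta(\xi)\trgl\circ \FF^{-1}\trgl(a\otimes v) \\
&= \cdot\circ \FF^{-1}\trgl\circ \Delta^\FF(\xi)\trgl(a\otimes v)
= (\xi_{1_\FF}\trgl a)\st (\xi_{2_\FF}\trgl v)~,
\end{flalign*}
where the only input is the $H$-covariance \eqref{eqn:moduleHcovariance} of the original left action, which says precisely that $\xi\trgl\circ\cdot = \cdot\circ\Delta(\xi)\trgl$. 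The analogous computation with the right action gives right $H^\FF$-covariance.

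Finally, if $V = E$ is itself a left $H$-module $A$-bimodule algebra, Theorem \ref{Theorem1} already supplies the associative $\st$-product on $E$ together with its $H^\FF$-covariance, and the bimodule axioms just proven show that the $A_\st$-actions on $E_\st$ are by $E_\st$-multiplication on left and right in a way compatible with both the algebra structure and the Hopf action; so $E_\st\in\AAAlg{H^\FF\!\!\!\!}{A_\st}{A_\st}$. The only real work is the mixed associativity $(a\st v)\st b = a\st(v\st b)$, since it requires combining two separate applications of the 2-cocycle identity with the two halves of \eqref{eqn:moduleHcovariance}; the rest of the argument is a mechanical transcription of Theorem \ref{Theorem1}.
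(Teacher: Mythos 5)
Your proposal is correct and follows essentially the same route as the paper's proof: verify the left $A_\st$-module axiom via the 2-cocycle identity exactly as in Theorem \ref{Theorem1}, note that the right-module and bimodule compatibility are proven by the same maneuver, check $H^\FF$-covariance by the computation of (\ref{ccc}), and reduce the algebra case to Theorem \ref{Theorem1}. You merely spell out the "similarly proven" steps (in particular the mixed compatibility $(a\st v)\st b = a\st(v\st b)$) that the paper leaves implicit.
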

\begin{proof}
Left $\AA_\st$-module property: 
\eq\label{astbstv} 
(a\st b)\st v=\of^\al\trgl \bigl((\of^\be\trgl a)(\of_\be\trgl
b)\bigr)\cdot(\of_\al\trgl v)=
(\of^\al\trgl a)\cdot \of_\al\trgl \bigl((\of^\be\trgl b)\cdot (\of_\be\trgl
v)\bigr)=a\st (b\st v)~.
\en 
The right $A_\st$-module and $A_\st$-bimodule
properties are similarly proven.
Compatibility between the left $H^\FF$ and the left $A_\st$-action, $
\xi\trgl (a\st v)=(\xi_{1_\FF}\trgl a)\st(\xi_{2_\FF}\trgl v)\,,$ 
is proven as in (\ref{ccc}). Also
the left $H^\FF$ and the right $A_\st$-action compatibility is
similarly proven.

In case we  have $V=E\in    \AAAlg{H\!\!}{A}{A}$, then $E_\st\in  \AAAlg{H^\FF\!\!\!\!}{A_\st}{A_\st}$
 because of Theorem \ref{Theorem1}.
\end{proof}

We observe that these structures can  be untwisted to the
original ones. This is done using the twist $\FF^{-1}$ of the Hopf
algebra $H^\FF$.
Moreover, if we consider only $H$-equivariant morphisms then these are not
deformed by the twisting procedure. Then Theorem {\ref{Theorem2}}
states that the category  $\MMMod{H\!\!\!}{A}{A}^{\,eqv}$ and the deformed category
$\MMMod{H^\FF\!\!\!\!}{A_\st}{A_\st}^{\,eqv}$ of $A$-bimodules with
$H$-equivariant morphisms are equivalent \cite{Giaquinto:1994jx};
(this result  follows from the equivalence of the tensor
categories of $H$-modules and twisted $H$-modules \cite{Drinfeld:1989st}).


\subsection{Twisting morphisms: the quantization map $D_\FF$}

The action $\ra$ of an Hopf algebra $H$ on an $H$-module $V$ lifts to
$\End_\bbK(V)$ (the algebra of endomorphisms or $\bbK$-linear maps $V\to V$), via the adjoint action,
for all $\xi\in H$ and $P\in \End_\bbK(V)$,
\eq
\xi\btrgl P := \xi_1 \trgl\circ P \circ S(\xi_2)\trgl\label{xiadjactP}~.
\en
This gives the algebra of endomorphisms $\End_\bbK(V)$ 
an $H$-module algebra structure. 
We can consider the deformed algebra 
$\End_\bbK(V)_\st$,  with the new composition product between
endomorphisms $P,Q\in \End(V)$,
\eq
P\circ_\st Q:=(\of^\al\RA P)\circ (\of_\al\RA Q)~.
\en
The algebras $\End_\bbK(V)_\st$ and 
$\End_\bbK(V)$ are respectively $H^\FF$ and $H$-module algebras. However as algebras they are isomorphic. 
This was proven in \cite{Aschieri:2005zs} in the case $A$ is $H$
itself. The same techniques show that it holds
more in general \cite{Kulish:2010mr, AS}. 
The isomorphism is given by
\eq\label{eqn:Ddef}
D_\FF:\End(V)_\st\to\End(V)~,~~P\mapsto
D_\FF(\pP):=(\of^\al\!\RA \pP)\circ \:\of_\al\ra=\of^\al_1 \ra\circ
\pP \circ S(\of^\al_2)\of_\al\ra ~.
\en
The twist and Hopf algebras properties imply that, for all $P,Q\in \End(V)$
\eq\label{DPSTQDPDQ}
D_\FF(\pP\circ_\st \qQ)=D_\FF(\pP)\circ D_\FF(\qQ)~,
\en
or equivalently $
D_\FF\circ \mu\circ \FF^{-1}\ra= \mu \circ (D_\FF\otimes D_\FF)
$  (where $\mu(P\otimes Q)=P\circ Q$).
An equivalent expression for $D_\FF$ can be shown to be $D_\FF(\pP)
=\f^\be\ra\circ \pP\circ  S(\f_\be) \v^{-1}\ra $, where
$\v^{-1}=S(\of^\al)\of_\al$. 
The inverse of $D_\FF$ is then given by
$
{D_\FF}^{-1}(\pP)=\of^\al\ra \circ \pP\circ \v S(\of_\al)\ra,
$
where $\v=\f^\be S(\f_\be)$.

\begin{Remark} We mention a special example in which  $V$ is the algebra
$A=C^\infty(M)$ on a manifold $M$, and  $H$ is the universal
enveloping algebra  $U\Xi$ of vector fields. 
Functions $f\in A$ can be seen as $\bbK$-linear maps $f:A\to A$, $h\mapsto
f h$ for all $h\in A$. Then formula (\ref{DPSTQDPDQ}) reads, for
$f,g\in A$,
\eq
D_\FF(f\st g)=D_\FF(f)\circ  D_\FF(g)~~~\mbox{i.e.,}~~~~ 
f\st g=D_\FF^{-1}(D_\FF(f) \circ D_\FF(g))~.
\en 
In words: the star product
of functions can be obtained by mapping these to the differential
operators $D_\FF(f)$ and $D_\FF(g)$, composing, and then transforming
back to function space.
\end{Remark}

There is another route to the quantization of the $H$-module
$\End_\bbK(V)$. We first consider $V$ as an $H^\FF$-module (this is trivially so
because $H$ and $H^\FF$ are the same algebra), however, to stress that
now it belongs to the $\MMMod{H^\FF\!\!\!\!}{}{}$ category we denote it by
$V_\st$.
Then $\End_\bbK(V_\st)\in \MMMod{H^\FF\!\!\!\!}{}{}$, where the $H^\FF$
adjoint action is given by, for all $\xi\in H^\FF$ and $P\in \End_\bbK(V_\star)$,
\begin{flalign}\label{HFadJact}
\xi\btrgl_\FF P := \xi_{1_\FF}\ra\,\circ P\circ S^\FF(\xi_{2_\FF})\ra\,~.
\end{flalign}
We will frequently write $\,(\End_\bbK(V_\st),\RA_\FF)\in \MMMod{H^\FF\!\!\!\!}{}{}\,$
in order to specify the $H^\FF$-action we are considering.
\begin{Theorem}\label{PDP0}
The map  
\begin{flalign}
\label{DFFEK}
D_\FF ~:~\End_\bbK(V)_\st~~&\longrightarrow~~~~ \End_{\bbK}(V_\st)\nn\\
 P~~~~&\longmapsto~~~~  D_\FF(P):=(\of^\al\btrgl P)\circ \of_\al\trgl~ 
\end{flalign}
is an isomorphism between the left $H^\FF$-module algebras
$(\End_\bbK(V)_\st, \RA)\in \AAAlg{H^\FF\!\!\!\!}{}{}$
and $(\End_{\bbK}(V_\st),\RA_\FF)\in  \AAAlg{H^\FF\!\!\!\!}{}{}$.
We call $D_\FF(P)$ the 
quantization of the endomorphism $P$.
\end{Theorem}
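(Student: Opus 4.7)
The plan is to verify that $D_\FF$ is (a) a $\bbK$-module isomorphism, (b) multiplicative with respect to $\circ_\st$ on the source and $\circ$ on the target, and (c) $H^\FF$-equivariant; parts (a) and (b) are essentially consequences of what has already been noted in the excerpt, while (c) carries the substantive new content.

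For (a): since $V_\st$ and $V$ share the same underlying $\bbK$-module, $\End_\bbK(V_\st)$ and $\End_\bbK(V)$ coincide as $\bbK$-modules, so $D_\FF$ is manifestly well-defined and $\bbK$-linear. Bijectivity follows from the inverse formula $D_\FF^{-1}(P) = \of^\al \ra \circ P \circ \v\, S(\of_\al) \ra$ recalled just before the theorem: the identities $D_\FF \circ D_\FF^{-1} = \id = D_\FF^{-1} \circ D_\FF$ are routine applications of the normalization and $2$-cocycle axioms for $\FF$ together with the antipode axiom for $H$. For (b): equation \eqref{DPSTQDPDQ} is exactly $D_\FF(P \circ_\st Q) = D_\FF(P) \circ D_\FF(Q)$, and since the multiplication in the target algebra $\End_\bbK(V_\st)$ is composition of $\bbK$-linear maps (the same $\circ$ that appears on the right of \eqref{DPSTQDPDQ}), this directly supplies the algebra morphism property needed here.

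For (c): I have to show $D_\FF(\xi \btrgl P) = \xi \btrgl_\FF D_\FF(P)$ for all $\xi \in H^\FF$ and $P \in \End_\bbK(V)$. Expanding both sides in Sweedler notation, using $\xi \btrgl P = \xi_1 \trgl \circ P \circ S(\xi_2)\trgl$ on the left, and inserting $\Delta^\FF(\xi) = \FF\Delta(\xi)\FF^{-1}$ together with $S^\FF(\xi) = \v\, S(\xi)\, \v^{-1}$ on the right, both expressions reduce to the form $(\cdots)\trgl \circ P \circ (\cdots)\trgl$ with the two tensor factors lying in $H$; the claim becomes an identity of elements of $H\otimes H$ acting respectively on the left and right of $P$. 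That identity is verified by shifting the $\xi_i$'s past the $\FF^{\pm 1}$ factors via the $2$-cocycle relation \eqref{propF1} (equivalently \eqref{ass}), and then collapsing the extra $\v^{\pm 1}$ pieces via the antipode axiom $\mu((S\otimes \id)\Delta)=\varepsilon$. I expect the Sweedler bookkeeping in this final step to be the main technical obstacle; the manipulation is in the same spirit as the one verifying that $S^\FF$ is the twisted antipode in Theorem \ref{TwistedHopfAlg}. Combining (a), (b) and (c), $D_\FF$ is the claimed isomorphism of $H^\FF$-module algebras.
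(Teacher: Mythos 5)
Your proposal is correct and follows essentially the same route as the paper: the $\bbK$-linear bijectivity and multiplicativity are taken, exactly as in the paper, from the discussion preceding the theorem (the inverse formula for $D_\FF$ and equation (\ref{DPSTQDPDQ})), and the substantive content is the direct Sweedler verification of $D_\FF(\xi\RA P)=\xi\RA_\FF D_\FF(P)$ using $\Delta^\FF=\FF\Delta\FF^{-1}$ and $S^\FF=\chi S\chi^{-1}$. One small correction to your sketch of that last step: the paper's computation starts from the equivalent expression $D_\FF(P)=\f^\be\trgl\circ P\circ S(\f_\be)\chi^{-1}\trgl$ and needs only the insertions $1\otimes 1=\FF^{-1}\FF$ and $\chi^{-1}\chi$ (which make $\xi_{1_\FF}\otimes\xi_{2_\FF}$ and $S^\FF(\xi_{2_\FF})$ appear directly), so the $2$-cocycle condition is not what drives the equivariance identity — it enters only in establishing that alternative expression and the multiplicativity (\ref{DPSTQDPDQ}).
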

\begin{proof} We already know that $D_\FF$ is an isomorphism of
  algebras. We have to check $H^\FF$-equivariance, i.e., that
 $D_\FF$ intertwines between the two  
$H^\FF$-actions,
for all $\xi\in \UU$ and $P\in \bbA$,
\eq\label{DHFintertwiner}
D_\FF(\xi \trgl \pP)=\xi\trgl_\FF D_\FF(\pP)~.
\en
Using the expression $D_\FF(\xi \trgl \pP)
=\f^\be (\xi \trgl \pP) S(\f_\be) \v^{-1}$ we have
\eqa
D_\FF(\xi \trgl \pP)
&=&\f^\be (\xi \trgl \pP) S(\f_\be)\v^{-1}
=\f^\be\xi_1\pP S(\xi_2)S(\f_\be)\v^{-1}
=\f^\be\xi_1\of^{\,\ga}\f^\de \pP S(\f_\be\xi_2\of_\ga\f_\de)\v^{-1}\nn\\
&=&
\xi_{1_\FF}\f^\de \pP S(\f_\de)\v^{-1}\v S(\xi_{2_\FF})\v^{-1}
=\xi_{1_\FF}D_\FF(\pP)
S^\FF(\xi_{2_\FF})\nn\\
&=&
\xi\trgl_{\FF}D_\FF(\pP)~,\label{DxitrglP}
\ena
where in the third equality we inserted $1\otimes 1=\FF^{-1}\FF$.
\end{proof}

\sk
Let $\Hom_\bbK(V,W)$ denote the space of $\bbK$-linear maps (morphisms) form $V$
to $W$. Similarly to $\End_\bbK(V)\in \AAAlg{H\!\!}{}{}$ we have 
$\Hom_\bbK(V,W)\in \MMMod{H\!\!}{}{}$, and we can consider the
quantizations $(\Hom_\bbK(V,W)_\st,\RA)\in \MMMod{H^\FF\!\!\!\!}{}{}$, 
and
$(\Hom_\bbK(V_\st,W_\st), \RA_\FF)\in \MMMod{H^\FF\!\!\!\!}{}{}$; here
we have explicitly written the adjoint $H^\FF$-actions carried by the
two modules. 
Then, as in Theorem \ref{PDP0}, the quantization map is an isomorphisms between these two
$H^\FF$-modules.  Also the composition $P\circ Q$ of two $\bbK$-linear
maps $Q : Z\to V$ and $P: V\to W$ can be deformed in the
$\star$-composition $P\circ_\st
Q=(\of^\al\RA P)\circ (\of_\al\RA Q)$. 
\sk
These data have a categorical description.
We have three categories: $(\MMMod{H\!\!}{}{}, \circ, \RA)$,
and the twisted ones $(\MMMod{H^\FF\!\!\!\!}{}{}, \circ_\st, \RA)$ and $(\MMMod{H^\FF\!\!\!\!}{}{}, \circ,
\RA_\FF)$.
\sk
In $(\MMMod{H\!\!}{}{}, \circ, \RA)$, objects are $H$-modules
and morphisms are $\bbK$-linear maps with their usual composition.
These maps are not $H$-equivariant but carry a specific  $H$-action
$\RA$, the one in (\ref{xiadjactP}), that is canonically lifted from
the $H$-action on the (source
and target) modules. 
This action is compatible with composition of  morphism, for all
$\xi\in H$, $P: V\to W$, $Q: Z\to V$, $\xi\RA(P\circ Q)=(\xi_1\RA
P)\circ (\xi_2\RA Q)$.

\sk
In $(\MMMod{H^\FF\!\!\!\!}{}{}, \circ_\st, \RA)$, objects are $H^\FF$-modules
and morphisms are $\bbK$-linear maps with $\star$-composition.
These maps carry the same $\RA$  action (\ref{xiadjactP}), 
that now is seen as an $H^\FF$-action (this is doable since $H$ and
$H^\FF$ are the same as algebras).
This $H^\FF$-action is  compatible with
$\star$-composition of morphisms, $\xi\RA(P\circ_\st Q)=(\xi_{1_\FF}\RA
P)\circ_\st (\xi_{2_\FF}\RA Q)$.
\sk
In
$(\MMMod{H^\FF\!\!\!\!}{}{}, \circ, \RA_\FF)$, objects are $H^\FF$-modules
and morphisms are $\bbK$-linear maps with their usual composition.
The $H^\FF$-action
$\RA_\FF$ on these maps is  the  (\ref{HFadJact}) one, that is
canonically lifted from the $H^\FF$-action on the (source and target) modules. This action is compatible with composition of morphisms, $\xi\RA_\FF(P\circ Q)=(\xi_{1_\FF}\RA_\FF
P)\circ (\xi_{2_\FF}\RA_\FF Q)$.
\sk
It follows from Theorem \ref{PDP0} that
$(\MMMod{H^\FF\!\!\!}\!{}{}, \circ_\st, \RA)$ and $(\MMMod{H^\FF\!\!\!\!}{}{}, \circ,
\RA_\FF)$ are equivalent categories via the functor that is the
identity on objects and $D_\FF$ on morphisms. Indeed $D_\FF$ satisfies 
(\ref{DPSTQDPDQ}) and (\ref{DHFintertwiner}) for $P,Q$ composable
morphisms.
\sk

\subsection{Twisting $A$-module morphisms}\label{Alinearity}
The findings of the previous section holds also in the subcategory
$\MMMod{H\!\!\!}{A}{A}$ of $H$-modules  $A$-bimodules, where $A$ is
an $H$-module algebra. In this case morphisms are right $A$-linear
maps, we use the notations $\Hom_A(V,W)$ and $\End_A(V)$ for right
$A$-linear morphisms and endomorphisms (where $V,W\in \MMMod{H\!\!\!}{A}{A}$).

The left $A$-module structure of $V$ represents $A$ as right
$A$-module endomorphisms of $V$, for all $a\in A$, $a\mapsto
l_a\in\End_A(V)$, where 
$l_a(v)=a\cdot v$ for all $v\in V$.
Then $End_A(V)$ is an $A$-bimodule by defining, for all
$P\in \End_A(V)$,
\eq
a\cdot P=l_a\circ P~~~,~~~~~~P\cdot a=P\circ l_a~.
\en
This $A$-bimodule structure is compatible with the $H$-module one
given by the adjoint action $\RA$, so that 
$(\End_A(V), \RA)\in \AAAlg{H\!\!\!}{A}{A}$. Similarly 
$(\Hom_A(V,W), \RA)\in \MMMod{H\!\!\!}{A}{A}$.

\begin{Theorem}\label{PDP} 
The map  
\begin{flalign}
\label{DFFEK2}
D_\FF ~:~\End_A(V)_\st~~&\longrightarrow~~~~ \End_{A}(V_\st)\nn\\
 P~~~~&\longmapsto~~~~  D_\FF(P):=(\of^\al\btrgl P)\circ \of_\al\trgl~ 
\end{flalign}
is an isomorphism between the left $H^\FF$-module $A_\star$-bimodule algebras
$(\End_A(V)_\st, \RA)\in \AAAlg{H^\FF\!\!\!\!}{A_\st}{A_\st}$
and $(\End_{A_\st}(V_\st),\RA_\FF)\in
\AAAlg{H^\FF\!\!\!\!}{A_\st}{A_\st}$.
\end{Theorem}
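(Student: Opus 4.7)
The strategy is to bootstrap from Theorem \ref{PDP0}. Observe that $\End_A(V)$ sits inside $\End_\bbK(V)$ as an $H$-submodule subalgebra: the adjoint action $\btrgl$ preserves right $A$-linearity precisely because the $A$-action on $V$ is $H$-covariant (equation (\ref{eqn:moduleHcovariance})). Consequently $\End_A(V)_\st \hookrightarrow \End_\bbK(V)_\st$ is an $H^\FF$-module subalgebra embedding, and the $H^\FF$-equivariant algebra isomorphism $D_\FF$ established in Theorem \ref{PDP0} carries over by restriction for free. The entire task thus reduces to two verifications: (a) $D_\FF$ and $D_\FF^{-1}$ restrict to bijections between $\End_A(V)_\st$ and $\End_{A_\st}(V_\st)$, and (b) $D_\FF$ intertwines the two $A_\st$-bimodule structures.

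For (a), given $P\in\End_A(V)$, I would compute $D_\FF(P)(v\st a)$ by expanding $v\st a=(\of^\be\trgl v)(\of_\be\trgl a)$, applying the explicit form $D_\FF(P)=(\of^\al\btrgl P)\circ\of_\al\trgl$, and pushing $\of_\al\trgl$ through the bimodule product via (\ref{eqn:moduleHcovariance}). The right $A$-linearity of $P$ then frees the factor carrying $a$, and the resulting Sweedler-indexed expression collapses to $D_\FF(P)(v)\st a$ by the 2-cocycle identity (\ref{ass}) together with the antipode-counit axioms. For the reverse inclusion I would appeal to symmetry: the Hopf algebra $H^\FF$ is twisted by $\FF^{-1}$ back to $H$, and applying the same argument to the quantization map $D_{\FF^{-1}}$ shows that $D_\FF^{-1}$ sends $\End_{A_\st}(V_\st)$ back into $\End_A(V)$.

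For (b), the clean route is to establish the single identity $D_\FF(l_a)=l_a^\st$, where $l_a(v)=a\cdot v$ and $l_a^\st(v)=a\st v$. By definition the left $A_\st$-action on $\End_A(V)_\st$ coming from Theorem \ref{Theorem2} is $a\st P=l_a\circ_\st P$, while on $\End_{A_\st}(V_\st)$ it is $a\cdot_\st Q=l_a^\st\circ Q$. Combining $D_\FF(l_a)=l_a^\st$ with the multiplicativity $D_\FF(P\circ_\st Q)=D_\FF(P)\circ D_\FF(Q)$ of Theorem \ref{PDP0} immediately yields $D_\FF(a\st P)=l_a^\st\circ D_\FF(P)$, and symmetrically $D_\FF(P\st a)=D_\FF(P)\circ l_a^\st$.

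The principal obstacle, common to both (a) and the verification $D_\FF(l_a)=l_a^\st$ in (b), is the explicit Sweedler-index manipulation: one has nested coproducts of $\of^\al$ together with the $\of^\be$ factors from the $\st$-product, and must collapse them using coassociativity, the 2-cocycle relation (\ref{ass}), and $\mu\circ(S\otimes\id)\Delta=\varepsilon(\cdot)\,1$. This is conceptually the same bookkeeping that powered Theorem \ref{PDP0} (cf.\ (\ref{DxitrglP})), so no genuinely new Hopf-algebraic input is required; the work is purely in keeping track of indices and inserting $\FF^{-1}\FF=1\otimes 1$ at the right place.
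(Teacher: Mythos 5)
The paper states Theorem \ref{PDP} without proof (it is deferred to the techniques of Theorem \ref{PDP0} and to the companion paper [AS]), so there is no in-text argument to compare against; judged on its own, your proposal is correct and is essentially the intended route. Your opening observation that $\btrgl$ preserves $\End_A(V)$ (via (\ref{eqn:moduleHcovariance}) and the antipode axiom) is exactly what makes step (a) close: after expanding $v\st a$ and applying the cocycle identity (\ref{ass}), it is the right $A$-linearity of $\of^\alpha\btrgl P$ (not merely of $P$) that frees the factor carrying $a$, and then $\xi\trgl\circ Q=(\xi_1\btrgl Q)\circ\xi_2\trgl$ collapses the rest to $D_\FF(P)(v)\st a$. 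The only point worth making explicit in (b) is the equivariance $\xi\btrgl l_a=l_{\xi\trgl a}$ (a one-line consequence of (\ref{eqn:moduleHcovariance}) and $\mu(\id\otimes S)\Delta=\varepsilon(\cdot)1$): without it the deformed actions of Theorem \ref{Theorem2} read $a\st P=l_{\of^\alpha\trgl a}\circ(\of_\alpha\btrgl P)$ rather than literally $l_a\circ_\st P$, and likewise $D_\FF(l_a)=l_{a}^{\st}$ uses this same identity. With that small lemma supplied, your reduction to the multiplicativity (\ref{DPSTQDPDQ}) of $D_\FF$ is clean and complete.
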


\section{Twisting and tensoring}
\subsection{Quasitriangular Hopf algebras and tensor product of $\bbK$-linear maps}
The tensor product $V\otimes W$
of two left $H$-modules
$V,W\in \MMMod{H\!\!\!}{}{}$ over the Hopf
algebra $H$ is again a left $H$-module,
$V\otimes W\in \MMMod{H\!\!\!}{}{}$. The left $H$-action 
is defined using the coproduct,
for all $\xi\in H$, $v\in V$ and $w\in W$,
\begin{flalign}
\label {eqn:productaction}
\xi\ra (v\otimes w) := (\xi_1\ra v)\otimes (\xi_2\ra w)~,
\end{flalign}
and extended by linearity to all $V\otimes W$.

Given two linear maps $V\stackrel{P}{\longrightarrow}\widetilde V$ and
$W\stackrel{Q}{\longrightarrow}\widetilde W$, the tensor product map
$V\otimes W\stackrel{P\otimes Q}{\longrightarrow}\widetilde V\otimes
\widetilde W$
is defined by, for all $v\in V, w\in W$, 
\eq(P\otimes Q)(v\otimes
w)=P(v)\otimes Q(w)\label{eqn:Ktensorho}
\en and extended by linearity to all $V\otimes W$.

This tensor product is not compatible with the $H$-action, indeed a
short calculation shows that, for all $\xi\in H$,  
$\xi\RA (P\otimes Q)\not=(\xi_1 \RA P)\otimes (\xi_2\RA Q)$; equality
holding in case $Q$ is $H$-equivariant i.e., $\xi\RA Q=\epsi(\xi) Q$.
In order to introduce a tensor product compatible with the $H$-action
we need a {\bf quasitriangular Hopf algebra} $(H,\RR)$. This is a Hopf
algebra $H$
with  an invertible element 
$\RR\in H\otimes H$ (called universal $R$-matrix). 
We use the notation $\RR=R^\al\otimes R_\al$. Let $\RR_{21}=R_\al\otimes R^\al$ then if $\RR_{21}=\RR^{-1}$,  the  quasitriangular Hopf algebra $(H,\RR)$ is
called {\bf triangular}.
Among the
properties of the $\RR$-matrix we recall that it satisfies the Yang-Baxter equation
$
\RR_{12}\RR_{13}\RR_{23}=\RR_{23}\RR_{13}\RR_{12}\,.
$

The $\RR$-matrix induces an isomorphism, called {\bf braiding},
between the tensor product modules  $V\otimes W\in \MMMod{H\!\!\!}{}{}$
and $ W\otimes V\in \MMMod{H\!\!\!}{}{}$,
\eq
\label{eqn:Rflipmap}
\tau_{{\cal R}\;W,V}:  W\otimes V\rightarrow V\otimes W~,~~~
w\otimes v\mapsto \tau_{\RR\; W,V}(w\otimes v)= 
(\oR^\alpha\ra v) \otimes (\oR_\alpha\ra w)~,
\en
where we used the notation
$\RR^{-1}=\oR^\al\otimes\oR_\al$.

The map $\tau_{{\cal R}\;W,V}$ 
is a left $H$-module isomorphisms, i.e., for all $\xi\in H, 
v\in V, w\in W,\,$ 
$\xi \ra(\tau_\RR(w\otimes v))=\tau_\RR(\xi\ra(w\otimes v))$,
or equivalently it is $H$-equivariant 
$
\xi\RA\tau_\RR = \varepsilon(\xi)\,\tau_\RR
$.
Let  $V,W,Z\in \MMMod{H\!\!\!}{}{}$ then it follows from the
$\RR$-matrix properties 
that on the triple tensor product $V\otimes
W\otimes Z$ we have the braid relations. For later purposes we
write them in terms of the inverse braid $\tau^{-1}$,
\begin{subequations}
\begin{flalign}\label{hexagon}
\tau^{-1}_{\RR\,1(23)} =
\tau^{-1}_{\RR\,23}\circ\tau^{-1}_{\RR\,12}\\
\tau^{-1}_{\RR\,(12)3} =
\tau^{-1}_{\RR\,12}\circ\tau^{-1}_{\RR\,23}
\end{flalign}
\end{subequations} 
where $\tau^{-1}_{\RR\,12}$ acts on the first and second  entry of
the triple tensor product, $\tau^{-1}_{\RR\,23}$ on the second and
third, and  $\tau^{-1}_{\RR\,1(23)}$ (or $\tau^{-1}_{\RR\,V, W\otimes Z}$) exchanges the first entry with
the second and third ones.

The first expression for example states that flipping an element $v$ to the right of the element
$w\otimes z$ is the same as first flipping $v$ to the right of $w$ and
then the result to the right of $z$.

\begin{Example}\label{Ex3.1}
The universal enveloping algebra ${U}\gg$ of a Lie algebra $\gg$ is a 
cocommutative Hopf algebra (i.e., for all $\xi\in H$, $\Delta(\xi)=\xi_1\otimes\xi_2=\xi_2\otimes\xi_1$).
Every cocommutative Hopf algebra $H$ has
a triangular structure given by the  $R$-matrix $\RR=1\otimes 1$. 
Let $\FF$ be a twist of this cocommutative Hopf algebra $H$, then the
Hopf algebra $H^\FF$ is triangular with $\RR$-matrix
$\RR^\FF=\FF_{21}\RR\FF^{-1} = \FF_{21}\FF^{-1}$. 
\end{Example}
We can now define the $\RR$-tensor product of $\bbK$-linear maps, see also \cite{Majid:1996kd} Chapter 9.3.
\begin{Definition}\label{defi:Rtensor}
Let $(H,\RR)$ be a quasitriangular Hopf algebra and $V,W,\widetilde{V},\widetilde{W}\in \MMMod{H\!\!\!}{}{}$
be left $H$-modules. The {\bf $\RR$-tensor product} of $\bbK$-linear maps 
is defined by, for
all $P\in\Hom_\bfK(V,\widetilde{V})$ and $Q\in\Hom_\bfK(W,\widetilde{W})$,
\begin{flalign}
\label{eqn:Rtensor}
P\otimes_\RR Q := (P\circ \oR^\alpha\ra\,)\otimes (\oR_\alpha\RA Q)\in \Hom_\bfK(V\otimes W,\widetilde{V}\otimes\widetilde{W})~,
\end{flalign}
where $\otimes$ is defined in (\ref{eqn:Ktensorho}).
\end{Definition}
From the definition it immediately follows that
\eq\label{POQPcircQ}
P\otimes_\RR Q = (P\otimes \id)\circ (
\oR^\alpha\ra\;\otimes \,\oR_\alpha\RA Q)
=(P\otimes_\RR \id)\circ (\id\otimes_\RR Q)~.
\en
The lift of 
$P\in \Hom_\bfK(V,\widetilde{V})$  is simply
$P\otimes_\RR\id=P\otimes \id$, while the lift of $Q$ is 
\eq
\id\otimes_\RR Q=
\oR^\alpha\ra\;\otimes \,\oR_\alpha\RA Q~.\label{idQRRQ}
\en
Use of the braiding map $\tau_{\RR}$ (cf. ({\ref{eqn:Rflipmap}}))
allows us to rewrite the lift $\id\otimes_\RR Q$ acting on $V\otimes W$ in terms of the lift 
$Q\otimes \id$ acting on $W\otimes V$,
$\,\id \otimes_\RR Q = \tau_{\RR}\circ (Q\otimes \id ) \circ
\tau^{-1}_{\RR}~.$

We summarize the properties of the $\RR$-tensor product $\otimes_\RR$ in
the following
\begin{Theorem}\label{RotimesK}
Let $(H,\RR)$ be a quasitriangular Hopf algebra and $V,W,Z,\widetilde{V},\widetilde{W},\widetilde{Z},\widehat{V},\widehat{W}
\in \MMMod{H\!\!\!}{}{}$ be left $H$-modules.
The $\RR$-tensor product is compatible with the left $H$-module structure, i.e.,~for all
$\xi\in H$, $P\in\Hom_\bfK(V,\widetilde{V})$ and $Q\in\Hom_\bfK(W,\widetilde{W})$,
\begin{subequations}
\begin{flalign}
\label{eqn:RtensorHmod}
\xi\RA (P\otimes_\RR Q) = (\xi_1\RA P)\otimes_\RR (\xi_2\RA Q)~.
\end{flalign}
Furthermore, the $\RR$-tensor product is associative, i.e.,~for all
$P\in\Hom_\bfK(V,\widetilde{V})$, $Q\in\Hom_\bfK(W,\widetilde{W})$ and $T\in\Hom_\bfK(Z,\widetilde{Z})$,
\begin{flalign}
\label{eqn:Rtensorass}
\bigl(P\otimes_\RR Q\bigr)\otimes_\RR T = P\otimes_\RR \bigl(Q\otimes_\RR T\bigr)~,
\end{flalign}
and satisfies the composition law, for all $P\in\Hom_\bfK(V,\widetilde{V})$,  $Q\in\Hom_\bfK(W,\widetilde{W})$,
$\widetilde{P}\in\Hom_{\bfK}(\widetilde{V},\widehat{V})$ and $\widetilde{Q}\in\Hom_\bfK(\widetilde{W},\widehat{W})$,
\begin{flalign}
\label{eqn:Rtensorcirc}
\bigl(\widetilde{P}\otimes_\RR\widetilde{Q}\bigr)\circ \bigl(P\otimes_\RR Q\bigr) = \bigl(\widetilde{P}\circ (\oR^\alpha\RA P)\bigr)
\otimes_\RR\bigl((\oR_\alpha\RA \widetilde{Q})\circ Q\bigr)~.
\end{flalign}
\end{subequations}
\end{Theorem}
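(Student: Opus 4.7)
The three claims will be proved by direct computation on arbitrary tensor elements, using the defining formula $P\otimes_\RR Q = (P\circ\oR^\alpha\ra)\otimes(\oR_\alpha\RA Q)$ together with the standard quasitriangular axioms: quasi-cocommutativity $\Delta(\xi)\RR^{-1} = \RR^{-1}\Delta^{\mathrm{op}}(\xi)$; the coproduct formulas $(\Delta\otimes\id)\RR^{-1} = \RR^{-1}_{23}\RR^{-1}_{13}$ and $(\id\otimes\Delta)\RR^{-1} = \RR^{-1}_{12}\RR^{-1}_{13}$; and the antipode identity $\Delta S = (S\otimes S)\Delta^{\mathrm{op}}$. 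I would use independent Sweedler labels $\alpha,\beta,\gamma,\dots$ for independent copies of $\RR^{-1}$ throughout, and keep track of $H$-actions via the adjoint-action formula $\xi\RA F = \xi_1\circ F\circ S(\xi_2)$.

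For part (a), expanding $\xi\RA F$ with $F = P\otimes_\RR Q$ places the coproducts of $\xi_1$ and $S(\xi_2)$ on either side of a middle $\RR^{-1}$, whereas the RHS $(\xi_1\RA P)\otimes_\RR(\xi_2\RA Q)$ places $\RR^{-1}$ on the outside with the Sweedler halves of $\Delta\xi$ nested separately inside each tensor factor. A single application of quasi-cocommutativity, combined with $\Delta S = (S\otimes S)\Delta^{\mathrm{op}}$ on the antipode side, reshuffles one pattern into the other. Part (b) is analogous: expanding the two iterated $\otimes_\RR$ on $v\otimes w\otimes z$, the LHS produces $(\Delta\otimes\id)\RR^{-1}$ between the first two legs and the third, while the RHS produces $(\id\otimes\Delta)\RR^{-1}$ between the first leg and the last two. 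Both factor into $\RR^{-1}_{12}\RR^{-1}_{13}\RR^{-1}_{23}$ via the two coproduct formulas (up to commuting factors acting on disjoint legs). This is precisely the content of the braid relations (\ref{hexagon}) that were already extracted from the $\RR$-matrix axioms earlier in the paper.

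Part (c) is the most delicate and I expect it to be the main obstacle. Expanding the LHS on $v\otimes w$ with independent copies $\oR^\alpha\otimes\oR_\alpha$ and $\oR^\beta\otimes\oR_\beta$ of $\RR^{-1}$ yields $\widetilde P(\oR^\alpha\ra P(\oR^\beta\ra v))\otimes(\oR_\alpha\RA\widetilde Q)((\oR_\beta\RA Q)(w))$. The RHS, by contrast, carries the coproducts $(\oR^\alpha)_{(1)},(\oR^\alpha)_{(2)}$ and $(\oR_\alpha)_{(1)},(\oR_\alpha)_{(2)}$ sitting inside the adjoint actions on $P$ and on $\widetilde Q$. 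The plan is to resolve these coproducts via the formulas for $(\Delta\otimes\id)\RR^{-1}$ and $(\id\otimes\Delta)\RR^{-1}$, producing additional $R$-factors that can then be commuted into position through quasi-cocommutativity so that matching $\RR\,\RR^{-1}=1$ pairs cancel. The real difficulty is pure index-tracking: three simultaneous copies of $\RR^{-1}$, each possibly coproducted, must be rearranged into the correct pattern without losing any adjoint-action piece. As a sanity check, in the symmetric limit $\RR = 1\otimes 1$ the identity (c) collapses to the bifunctoriality $(\widetilde P\otimes\widetilde Q)\circ(P\otimes Q) = (\widetilde P\circ P)\otimes(\widetilde Q\circ Q)$ of the ordinary tensor product of linear maps, which is reassuring.
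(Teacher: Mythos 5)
The paper states Theorem \ref{RotimesK} without proof (deferring to \cite{Majid:1996kd} Ch.~9.3 and to \cite{AS}), so there is no in-paper argument to compare against. Judged on its own, your proposal assembles the right toolkit, and parts (a) and (b) are sound in outline: for (b) in particular, expanding both sides and applying $(\Delta\otimes\id)\RR^{-1}=\RR^{-1}_{23}\RR^{-1}_{13}$ to one side and $(\id\otimes\Delta)\RR^{-1}=\RR^{-1}_{12}\RR^{-1}_{13}$ to the other does make the two four-fold tensors coincide. But part (c), which you yourself flag as the main obstacle, is left as a plan rather than a proof, and the mechanism you anticipate there is not the one that closes the computation. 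After resolving the coproducts on the right-hand side of (\ref{eqn:Rtensorcirc}) with the two formulas above, what remains is not an $\RR\,\RR^{-1}$ cancellation reached via quasi-cocommutativity, but the identity $S(\oR^{\delta})\,\oR^{\epsilon}\otimes \oR_{\epsilon}\,\oR_{\delta}=1\otimes 1$, obtained by applying $\mu\circ(S\otimes\id)$ to the first two legs of $(\Delta\otimes\id)\RR^{-1}=\RR^{-1}_{23}\RR^{-1}_{13}$ and using the antipode axiom together with $(\varepsilon\otimes\id)\RR^{-1}=1$. Without naming that identity the deferred ``index-tracking'' is exactly where the proof lives, so as written there is a genuine gap. (A smaller instance of the same issue occurs in (a): converting $\oR^{\alpha}S(\xi_3)\otimes\xi_2\oR_{\alpha}$ into $S(\xi_2)\oR^{\alpha}\otimes\oR_{\alpha}\xi_3$ needs quasi-cocommutativity \emph{combined with} the antipode/counit axioms in a short convolution-inverse argument, not a single substitution.)

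A route for (c) that avoids the bookkeeping entirely, and which the paper's own equation (\ref{POQPcircQ}) is set up for, is the following. The elementary Hopf identity $\eta\ra\circ P=(\eta_1\RA P)\circ\eta_2\ra$ (antipode axiom) together with $(\Delta\otimes\id)\RR^{-1}=\RR^{-1}_{23}\RR^{-1}_{13}$ gives the exchange law $(\id\otimes_\RR\widetilde{Q})\circ(P\otimes\id)=\bigl((\oR^{\alpha}\RA P)\otimes\id\bigr)\circ\bigl(\id\otimes_\RR(\oR_{\alpha}\RA\widetilde{Q})\bigr)$, while $(\id\otimes\Delta)\RR^{-1}=\RR^{-1}_{12}\RR^{-1}_{13}$ gives the multiplicativity of lifts, $(\id\otimes_\RR\widetilde{Q})\circ(\id\otimes_\RR Q)=\id\otimes_\RR(\widetilde{Q}\circ Q)$. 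Writing each factor as $(P\otimes\id)\circ(\id\otimes_\RR Q)$ via (\ref{POQPcircQ}) and applying these two lemmas yields (\ref{eqn:Rtensorcirc}) in three lines, with all cancellations made explicit. I recommend you either carry out the direct expansion including the $S(\oR^{\delta})\,\oR^{\epsilon}\otimes \oR_{\epsilon}\,\oR_{\delta}=1\otimes 1$ step, or switch to this factorized argument; as a sanity check your observation that (c) degenerates to bifunctoriality when $\RR=1\otimes 1$ is correct and worth keeping.
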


We have studied the tensor product of modules $\otimes$, and that of
morphisms $\otimes_\RR$ in the category of $H$-modules; that
henceforth we denote by 
$(\MMMod{H\!\!\!}{},\circ,\RA, \otimes,\otimes_\RR)$\footnote{This
  category  is  not quite  a tensor category because of
  (\ref{eqn:Rtensorcirc}) (that shows that $(\otimes, \otimes_\RR)$
  is not a bifunctor).}.

\subsection{Twisting tensor product modules and morphisms}

In the same way we have constructed the category
$(\MMMod{H\!\!\!}{},\circ,\RA, \otimes,\otimes_\RR)$, where  modules are
denoted by $V, W,..$, morphisms by $P, Q, ...$ and their tensor
products by
$V\otimes W$, $P\otimes_\RR Q$, we can construct the category
$(\MMMod{H^\FF\!\!\!\!}{},\circ,\RA_\FF,
\otimes_\st,\otimes_{\RR^\FF})$. We have just to replace the
quasitriangular Hopf algebra $(H,\RR)$ with the quasitriangular one
$(H^\FF,\RR^\FF)$, with universal $\RR$-matrix $\RR^\FF=\FF_{21}\RR\FF^{-1}$.

\sk
In $(\MMMod{H^\FF\!\!\!\!}{},\circ,\RA_\FF,
\otimes_\st,\otimes_{\RR^\FF})$ we denote $H^\FF$-modules by $V_\st,W_\st,...$ (recall however that as
$\bbK$-modules they are the same as $V,W...$), morphisms, that are 
$\bbK$-linear maps, by $P,
Q,...$ (for a $\bbK$-linear map $V_\st\stackrel{P}{\longrightarrow}W_\st$
we have $V=V_\st, W=W_\st$, the $H$- or $H^\FF$-module structure being irrelevant), and their tensor products by $V_\st\otimes_\st W_\st$,
$P\otimes_{\RR^\FF} Q$. 
The notation $V_\st\otimes_\st W_\st$ stresses that  the
$H^\FF$-action on $V_\st\otimes_\st W_\st$ is obtained using the $H^\FF$-coproduct (not the
$H$-one), explicitly $\xi\ra (v\otimes_\st w)=(\xi_{1_\FF}\ra
v)\otimes_\st (\xi_{2_\FF}\ra w)$, for all $\xi\in H^\FF$, $v\in V_\st$, $w\in
W_\st$.
\sk

Given the $H$-modules $V$, $W$ we can then consider the 
$H^\FF$-modules
$V_\st\otimes_\st W_\st$ and $(V\otimes W)_\st$. The $H^\FF$ action on these modules is different  because on $(V\otimes W)_\st$
it is obtained using the $H$-coproduct (not the $H^\FF$-one), recall
(\ref{eqn:productaction}).
It is easy to show that $V_\st\otimes_\st W_\st$ and $(V\otimes
W)_\st$ are isomorphic via the $\bfK$-linear and $H$-equivariant map
\eq\label{phiVWVW}
\varphi_{V,W}:=\FF^{-1}\trgl\,: V_\st\otimes_\star W_\st \to (V\otimes W)_\st~~,~~~
v\otimes_{\st} w\mapsto \varphi_{V,W}(v\otimes_{\star} w)
=(\of^\alpha\ra v)  \otimes (\of_\alpha\ra w)~.
\en
Indeed, $\varphi_{V,W}\bigl(\xi\ra_\FF(v\otimes_\star w)\bigr)=
\xi\ra(\varphi_{V,W}(v\otimes_\star w))\,,$ and
the inverse of $\varphi_{V,W}$ is 
$\varphi^{-1}_{V,W}=\FF\ra$.
\sk
Twist quantization of tensor products of morphisms is then described by the following

\begin{Theorem}\label{theo:promodhomdef}
Let $(H,\RR)$ be a quasitriangular Hopf algebra with twist $\FF\in
H\otimes H$ and  $V,W,\widetilde{V},\widetilde{W}\in \MMMod{H\!\!}{}{}$.
Then for all $P\in \Hom_\bfK(V,\widetilde{V})$
and $Q\in\Hom_\bfK(W,\widetilde{W})$
the following diagram of $\bbK$-linear maps commutes:
\begin{flalign}\label{eqn:promodhomdef}
 \xymatrix{
V_\st\otimes_\star W_\st \ar[d]_-{\varphi^{}_{V,W}}\ar[rrrr]^-{D_\FF(P)\otimes_{\RR^\FF}D_\FF(Q)} & & & &\widetilde{V}\otimes_\star\widetilde{W_\st} \ar[d]^-{^{}\varphi_{\widetilde{V},\widetilde{W}}}\\
(V\otimes W)_\st \ar[rrrr]_-{D_\FF\bigl((\of^\alpha\RA P)\otimes_\RR (\of_\alpha\RA Q)\bigr)}& & & &(\widetilde{V}\otimes\widetilde{W})_\st
}
\end{flalign}
\end{Theorem}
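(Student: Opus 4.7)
The plan is to verify the commuting square by unfolding all definitions on elementary tensors and reducing the identity to the defining relation $\RR^\FF = \FF_{21}\RR \FF^{-1}$ together with the 2-cocycle condition (\ref{propF1}). No new structural input beyond the intertwining property of $D_\FF$ (equation (\ref{DHFintertwiner})) should be required; the diagram is essentially the assertion that $(D_\FF,\varphi)$ realizes a monoidal/braided equivalence between the two incarnations of $\MMMod{H^\FF\!\!\!\!}{}{}$.

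First I would unpack the top arrow. By Definition \ref{defi:Rtensor} applied to the quasitriangular Hopf algebra $(H^\FF,\RR^\FF)$,
\begin{flalign*}
D_\FF(P)\otimes_{\RR^\FF}D_\FF(Q)=\bigl(D_\FF(P)\circ \oR^{\FF\,\al}\ra\bigr)\otimes\bigl(\oR^\FF_\al\RA_\FF D_\FF(Q)\bigr).
\end{flalign*}
Using the intertwiner property (\ref{DHFintertwiner}) in the form $\xi\RA_\FF D_\FF(Q)=D_\FF(\xi\RA Q)$, the second factor becomes $D_\FF(\oR^\FF_\al\RA Q)$, so the top arrow is rewritten with both factors in the image of $D_\FF$. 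Now substitute $\oR^\FF = \FF\,\oR\,\FF_{21}^{-1}$, which is the inverse of $\RR^\FF=\FF_{21}\RR\FF^{-1}$.

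Next I would post-compose with $\varphi_{\widetilde V,\widetilde W}=\FF^{-1}\ra$, evaluate on a generic $v\otimes_\star w\in V_\st\otimes_\star W_\st$, and expand $D_\FF(P)=(\of^\be\RA P)\circ \of_\be\ra$ on each slot. The resulting expression contains two $D_\FF$-factors sandwiched by several copies of $\FF$, $\FF^{-1}$, $\RR$ and their coproducts. On the other side of the diagram, pre-compose $\varphi_{V,W}=\FF^{-1}\ra$ with the definition of $D_\FF$ applied to $(\of^\al\RA P)\otimes_\RR(\of_\al\RA Q)$, and expand again using Definition \ref{defi:Rtensor} (the $\otimes_\RR$ here being the one for $H$, not $H^\FF$); this produces a similar nested expression.

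The final step is to match the two expressions. The matching reduces (after routine use of the antipode axioms and $H$-equivariance of the braiding $\tau_\RR$) to the identity $\FF_{21}\RR = \RR^\FF\FF$ inside $H\otimes H$, together with repeated application of the 2-cocycle condition (\ref{propF1}) / (\ref{ass}) to reshuffle the nested $\FF$ factors coming from the two coproducts $\Delta(\of^\al)$ and $\Delta(\of_\al)$ inside $D_\FF(P)$ and $D_\FF(Q)$. The genuine obstacle is bookkeeping: one must keep track of four independent Sweedler families (for $\FF$, $\FF^{-1}$, $\RR$, and the twisted $\RR^\FF$) and their first coproducts, while the structural content is merely that $\RR^\FF$ has been defined precisely so that $\varphi$ becomes braided. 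Once the $\RR$-matrix substitution is made, only the cocycle identity for $\FF$ is needed, and both sides collapse to the same expression.
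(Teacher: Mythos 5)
Your plan identifies the right ingredients, and I do not see a step in it that would fail: the theorem is indeed a direct consequence of Definition \ref{defi:Rtensor} applied to $(H^\FF,\RR^\FF)$, the intertwining property (\ref{DHFintertwiner}), the formula $\RR^\FF=\FF_{21}\RR\FF^{-1}$ (so $(\RR^\FF)^{-1}=\FF\RR^{-1}\FF_{21}^{-1}$, as you write), and the $2$-cocycle condition in the form (\ref{ass}). Be aware, though, that this paper states the theorem without proof (it is deferred to the companion work \cite{AS}), so there is no in-text argument to measure yours against; what follows is an assessment of your proposal on its own terms.

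The substantive weakness is that your proposal stops exactly where the proof begins: the ``matching'' of the two nested expressions is asserted, not performed, and with four Sweedler families and two different antipodes ($S$ and $S^\FF=\chi S\chi^{-1}$) in play, a head-on expansion is where such arguments usually go wrong. You can eliminate almost all of that bookkeeping using structure already stated in the paper. First use (\ref{POQPcircQ}) to factor $D_\FF(P)\otimes_{\RR^\FF}D_\FF(Q)=(D_\FF(P)\otimes\id)\circ(\id\otimes_{\RR^\FF}D_\FF(Q))$, and the corresponding $\circ_\st$-factorization of $(\of^\alpha\RA P)\otimes_\RR(\of_\alpha\RA Q)$ noted at the end of Section 3.2, together with the multiplicativity (\ref{DPSTQDPDQ}) of $D_\FF$; this splits the square into two squares, one for each tensor slot. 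The first-slot square ($Q=\id$, using $D_\FF(\id)=\id$ and the normalization (\ref{propF2})) is the identity $\varphi\circ(D_\FF(P)\otimes\id)=D_\FF(P\otimes\id)\circ\varphi$, which is a pure cocycle computation involving no $\RR$ at all. For the second slot, write $\id\otimes_{\RR}Q=\tau_\RR\circ(Q\otimes\id)\circ\tau_\RR^{-1}$ and likewise with $\RR^\FF$; then the entire $\RR$-dependence of the theorem is concentrated in the single identity $\varphi_{V,W}\circ\tau_{\RR^\FF\,W,V}=\tau_{\RR\,W,V}\circ\varphi_{W,V}$, which, once both sides are written as elements of $H\otimes H$ acting after the flip, is literally the statement $(\RR^\FF)^{-1}=\FF\RR^{-1}\FF_{21}^{-1}$ and nothing more. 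After that substitution you are back to the first-slot case. Organized this way, your remark that ``only the cocycle identity is needed once the $\RR$-matrix substitution is made'' becomes a precise short argument rather than a hope about cancellations, and it also makes transparent the point your last paragraph gestures at: $\RR^\FF$ is defined exactly so that $\varphi$ intertwines the braidings.
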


We notice that if $P$, $Q$ are $H$-equivariant maps, then
$\otimes_{\RR^\FF}=\otimes_{\RR}=\otimes$ and the horizontal
maps in (\ref {eqn:promodhomdef}) become simply  $P\otimes Q$. Then (\ref
{eqn:promodhomdef}) with $H$-equivariant maps $P,Q$,
and commutativity of the diagram 
 \begin{flalign}\label{eqn:higheriotast}
 \xymatrix{
V_\star\otimes_{\star} W_\star\otimes_{\star} \Omega_\star
\ar[d]_-{\id_{V_\st}\otimes_\RR
  \varphi_{W,\Omega}}\ar[rrr]^-{\varphi_{V,W}\otimes_\RR\id_{\Omega_\st}}
& & &(V\otimes W)_\star \otimes_{\star} \Omega_\star
\ar[d]^-{\varphi_{(V\otimes W),\Omega}}\\
V_\star\otimes_{\star} (W\otimes \Omega)_\star \ar[rrr]_-{\varphi_{V,(W\otimes\Omega)}} & & &(V\otimes W\otimes \Omega)_\star
}
\end{flalign}
(that easily follows from the twist cocycle condition (\ref{ass}))
show that the categories of representations of the Hopf algebras $H$
and $H^\FF$ are equivalent.

In the more general case of arbitrary $\bfK$-linear maps $P,Q$, commutativity of the diagram
(\ref{eqn:promodhomdef}) relates the categories $(\MMMod{H^\FF\!\!\!\!}{},\circ,\RA_\FF,
\otimes_\st,\otimes_{\RR^\FF})$ and $(\MMMod{H^\FF\!\!\!\!}{},\circ_\st,\RA,
\otimes,\otimes_{\RR_\st})$.
In this latter category
$(\MMMod{H^\FF\!\!\!\!}{},\circ_\st,\RA,
\otimes,\otimes_{\RR_\st})$ we denote modules by $V_\st, W_\st,..$ and morphisms by $P, Q, ...\,$. The
$\otimes$-tensor product of modules in this category 
is by definition the new module $(V\otimes W)_\st$.
The tensor product of $\bbK$-linear maps is given by
$P\otimes_{\RR_\st} Q=(\of^\al\RA P)\otimes_\RR(\of_\al\RA Q)$;
it can be shown to be  just (\ref{POQPcircQ}) with the composition of morphisms $\circ$
replaced by the $\star$-composition $\circ_\st$. [Hint: use (\ref{eqn:RtensorHmod})].

\subsection{Twisting tensor products of $A$-module morphisms}

As in Subsection \ref{Alinearity}, we now consider the subcategory 
$\MMMod{H\!\!\!}{A}{A}$ of $H$-modules  $A$-bimodules, where 
morphisms are right $A$-linear maps. In this subcategory we have the
tensor product  $\otimes_A$. We recall that the
tensor product over $A$ of  the modules $V$ and $W$, denoted $V\otimes_A W$, 
is the quotient  of the $\bbK$-module $V\otimes W$ via the
$\bbK$-submodule 
generated by the elements 
$v\cdot a\otimes w-v\otimes a\cdot w$, for all 
$a\in A,v\in V, w\in W$.
The image of $v\otimes w$ under the canonical projection $\pi :
V\otimes W\to V\otimes_A W$ is denoted by $v\otimes_A w$. 

We further restrict to the case where
$A$ is a  {\bf quasi-commutative algebra}, i.e., by definition
for all $a,\tilde a\in A$,
\eq
a\,\tilde a=(\oR^\al\trgl \tilde a) (\oR_\al\trgl  a)~;\label{QCalg}
\en
and where the $A$-bimodules are {\bf quasi-commutative}, i.e., by definition, for all $a\in A, v\in V$,
\eq\label{QCrightact}
v\cdot a=(\oR^\al\trgl a)\cdot(\oR_\al\trgl v)~.
\en
Notice that in $A$ we have 
$a\,\tilde a=(\oR^\al\trgl \tilde a) (\oR_\al\trgl  a)=
(\oR^\be\oR_\al \trgl  a) (\oR_\be\oR^\al\trgl\tilde  a).$ 
We recall that a  {triangular } Hopf algebra $(H,\RR)$, is a {quasitriangular} one with
$\RR^{-1}=\RR_{21}$ i.e., $\oR^\be\oR_\al \otimes
\oR_\be\oR^\al=1\otimes 1\in H\otimes H$. 
We see that quasi-commutative modules and algebras are very natural in the context
of triangular Hopf algebras.  They naturally arise via twist
quantization of commutative algebras that carry a representation of a
cocommutative Hopf algebra with trivial $\RR$-matrix $\RR=1\otimes 1$,
see Example \ref{Ex3.1} (see also \cite{Aschieri:2005zs}).

It can be shown that 
in quasi-commutative bimodules over a quasi-commutative algebra $A$,
right $A$-linear maps are also {\bf quasi-left $A$-linear},
for all $a\in A, w\in W, Q\in \Hom_A(W,\widetilde W)$,
\eq\label{twlefAlin}
\,Q(a\cdot w)=(\oR^\al\trgl a)\cdot(\oR_\al\RA Q)(w)~.
\en
Use of this property leads to prove that 
the $\otimes_\RR$-tensor product of $\bfK$-linear maps over
$V\otimes W$ of Definition
\ref{defi:Rtensor}  induces an $\otimes_\RR$-tensor product of
$A$-linear maps on the $A$-module $V\otimes_A W$, i.e.,
$V\otimes_A W \stackrel{P\otimes_\RR Q}{\longrightarrow}\,\widetilde V\otimes_A
\widetilde W$ is a well defined right $A$-linear map if $V\stackrel{P}{\longrightarrow} \tilde
  V$,$W\stackrel{Q}{\longrightarrow} \widetilde W$ are right
  $A$-linear and $V,\widetilde V,W,\widetilde W \in 
\MMMod{H\!\!\!}{A}{A}$ are quasi-commutative. 

All the properties of the
$\otimes_\RR$ tensor product of $\bfK$-linear maps of Theorem \ref{RotimesK}  hold for this
induced tensor product of $A$-linear maps.

Also the isomorphism $\varphi_{V,W}: V\otimes_\st W\longrightarrow
(V\otimes W)_\st$ induces the isomorphism on the quotient modules
$\varphi_{V,W}: V\otimes_{A_\st} W\longrightarrow
(V\otimes_A W)_\st$, that satisfies a commutative diagram like
(\ref{eqn:higheriotast}).

\begin{Theorem}\label{theo:prodcondef2} Consider a quasi-commutative algebra $A\in
  \AAAlg{H\!\!}{}{}$. Then Theorem \ref{theo:promodhomdef} holds for right $A$-linear maps on quasi-commutative bimodules in 
$\MMMod{H\!\!\!}{A}{A}$. Just replace in diagram (\ref{eqn:promodhomdef})
the tensor products $\otimes$ and $\otimes_\st$, with  $\otimes_A$ and
$\otimes_{A_\st}$ respectively.
\end{Theorem}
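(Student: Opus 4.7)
The plan is to upgrade Theorem \ref{theo:promodhomdef} to the $A$-linear setting by a quotient argument: I will show that each of the four maps in diagram (\ref{eqn:promodhomdef}) descends through the canonical projections
\[
\pi:V\otimes W \twoheadrightarrow V\otimes_A W~,\qquad \pi_\st:V_\st\otimes_\st W_\st \twoheadrightarrow V_\st\otimes_{A_\st} W_\st~,
\]
and analogously for $\widetilde V,\widetilde W$. Once all four arrows descend, commutativity of the resulting quotiented diagram is automatic from the universal property of the quotient together with commutativity of (\ref{eqn:promodhomdef}) at the $\bbK$-linear level.

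Two of the four descents are already recorded in the excerpt: the vertical isomorphism $\varphi_{V,W}=\FF^{-1}\trgl$ factors to $V_\st\otimes_{A_\st}W_\st\to(V\otimes_A W)_\st$, and the $\otimes_\RR$-tensor product of right $A$-linear maps on quasi-commutative bimodules factors to a right $A$-linear map on the $\otimes_A$-quotient (with the key ingredient being the quasi-left $A$-linearity (\ref{twlefAlin})). Applied on the lower horizontal row, this descent shows that $(\of^\al\RA P)\otimes_\RR(\of_\al\RA Q)$ is well-defined on $V\otimes_A W$. Applied on the upper row — noting that $(H^\FF,\RR^\FF)$ is again quasitriangular, that $A_\st$ is quasi-commutative with respect to $\RR^\FF$, and that $V_\st,W_\st$ are quasi-commutative $A_\st$-bimodules — the same principle applied to the right $A_\st$-linear maps $D_\FF(P),D_\FF(Q)$ (cf.~Theorem \ref{PDP}) shows that $D_\FF(P)\otimes_{\RR^\FF}D_\FF(Q)$ descends to a map $V_\st\otimes_{A_\st}W_\st\to\widetilde V_\st\otimes_{A_\st}\widetilde W_\st$.

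It then remains to check that applying $D_\FF$ to the descended lower-row map gives the same answer as descending $D_\FF\bigl((\of^\al\RA P)\otimes_\RR(\of_\al\RA Q)\bigr)$; this is immediate from the fact that the $A$-relation $v\cdot a\otimes w - v\otimes a\cdot w$ is $H$-stable by the covariance (\ref{eqn:moduleHcovariance}), while $D_\FF$ is built entirely from the $H$-action $\RA$ via (\ref{eqn:Ddef}), so $D_\FF$ commutes with $\pi$ (and analogously with $\pi_\st$).

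The main technical obstacle will be verifying that $A_\st$, $V_\st$ and $W_\st$ are indeed quasi-commutative with respect to $\RR^\FF$, since this is what allows us to invoke the descent of $\otimes_{\RR^\FF}$-tensor products on the upper row. This is a direct computation inserting the definitions (\ref{modst}) of the $\st$-actions, the quasi-commutativity identities (\ref{QCalg}), (\ref{QCrightact}) for $A$ and its bimodules, and the 2-cocycle property (\ref{ass}) into the identity $\RR^\FF=\FF_{21}\RR\FF^{-1}$; it is essentially the only place where the argument does more than cite Theorem \ref{theo:promodhomdef} together with the descent statements already recorded in the excerpt.
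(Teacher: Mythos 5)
Your proposal is correct and follows essentially the same route as the paper: the paper's (implicit) proof is precisely the descent argument, relying on the two facts recorded just before the theorem statement — that quasi-left $A$-linearity (\ref{twlefAlin}) makes $\otimes_\RR$ of right $A$-linear maps well defined on $\otimes_A$-quotients, and that $\varphi_{V,W}$ descends to $V_\st\otimes_{A_\st}W_\st\to(V\otimes_A W)_\st$ — together with the $H$-stability of the relations submodule and the quasi-commutativity of the twisted algebra and bimodules with respect to $\RR^\FF$. The point you flag as the "main technical obstacle" (quasi-commutativity of $A_\st$, $V_\st$, $W_\st$ for $\RR^\FF=\FF_{21}\RR\FF^{-1}$) is indeed the one computation the paper leaves to the reader, and it goes through exactly as you sketch.
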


\section{Connections}
\subsection{Twisting connections}

Let $A$ be a unital and associative algebra over $\bfK$. 
A {\bf differential calculus} $\bigl(\Omega^\bullet,\wedge,\dif\bigr)$
over $A$ is a graded algebra 
$\bigl(\Omega^\bullet \!= \!\bigoplus_{n\geq0}
\Omega^n,\wedge\bigr)$ over $\bfK$, where $\Omega^0=A$ has degree
zero, together with 
$\bfK$-linear maps of degree one $\dif:\Omega^n \to \Omega^{n+1}$, satisfying $\dif\circ\dif=0$ and the graded Leibniz rule
\begin{flalign}
 \dif(\theta\wedge\theta^\prime) = (\dif\theta)\wedge\theta^\prime + (-1)^{\deg(\theta)}\,\theta\wedge(\dif\theta^\prime)~,
\end{flalign}
for all $\theta,\theta^\prime\in\Omega^\bullet$, with $\theta$ of
homogeneous degree. 
Because of the Leibniz rule, the $\bfK$-modules $\Omega^n$, $n>0$, are 
$A$-bimodules, i.e.~$\Omega^n\in \MMMod{}{A}{A}$.
As in commutative differential geometry  we call $\Omega^n$ the module of
$n$-forms; we also assume that any $1$-form $\theta\in \Omega :=\Omega^1$ can be written as
$\theta=\sum_i a_i\dif b_i$, with $a_i,b_i\in A$, i.e.~that
exact $1$-forms generate $\Omega$ as a left $A$-module.

Let $\bigl(\Omega^\bullet,\wedge,\dif\bigr)$ be a differential calculus
over $A=\Omega^0$; a {\bf right connection} on a bimodule 
 $V\in \MMMod{}{A}{A}$ is a $\bfK$-linear map $\dd:V\to
V\otimes_A\Omega$, satisfying the right Leibniz rule, for all $v\in V$ and $a\in A$,
\begin{flalign}
\label{eqn:rightcon}
 \dd(v\cdot a) = (\dd v)\cdot a + v\otimes_A \dif a~.
\end{flalign}
We denote by $\Con_A(V)$ the set of all connections on the bimodule
$V\in \MMMod{}{A}{A}$.
Notice that this definition holds also if we just have a right $A$-module $V\in
\MMMod{}{}{A}$; actually all the statements in this subsection hold
in the category of right $A$-modules. 
 $\Con_A(V)$ is an affine space over
$\Hom_A(V,V\otimes_A \Omega)$ because for $\dd\in \Con_A(V)$ and $P\in
Hom_A(V,V\otimes_A \Omega)$ then $\dd+P\in\Con_A(V)$, and any
connection differs from a given one $\dd$ by a morphisms $P$.
\sk
Let $H$ be a Hopf algebra and let $A=\Omega^0$ and 
$\Omega^\bullet$ be  left $H$-module algebras.
The differential calculus $\bigl(\Omega^\bullet,\wedge,\dif\bigr)$ is a  {\bf left $H$-covariant differential calculus} over $A$,
if the $H$-action $\ra$ is degree preserving and
the  differential is equivariant, for all $\xi\in H$ and $\theta\in \Omega^\bullet$,
\begin{flalign}
\label{eqn:equivar}
\xi\ra (\dif\theta) = \dif(\xi\ra \theta)~.
\end{flalign}
Since the $H$-action is degree preserving  we have for all $n\geq 0$,
$\Omega^n\in \MMMod{H\!\!\!}{A}{A}$.
\sk
Given a twist $\FF\in H\otimes H$, a left $H$-covariant
differential calculus $\bigl(\Omega^\bullet,\wedge,\dif\bigr)$ over
$A$ can be quantized (see Theorem \ref{Theorem1}) to yield a left $H^\FF$-covariant differential calculus
$\bigl(\Omega^\bullet,\wedge_\star,\dif\bigr)$
over $A_\star$ (equivariance of the differential $\dif$ implies that $\dif$ is also a differential on $\bigl(\Omega^\bullet,\wedge_\star\bigr)$).

\sk
Let $V\in \MMMod{H\!\!\!}{A}{A}$;
we now briefly outline how given a twist $\FF\in H\otimes H$ of the Hopf algebra
$H$, the quantization map $D_\FF$ leads to an isomorphism
$\Con_A(V)\cong \Con_{A_\star}(V_\star)$ between connections on the
undeformed module $V\in \MMMod{H\!\!\!}{A}{A}$ and on the deformed module
$V_\star\in \MMMod{H^\FF\!\!\!\!}{A_\st}{A_\st}$.

We first observe that the left $H^\FF$-module $A_\st$-bimodule isomorphism 
$
(V\otimes_A\Omega)_\star \stackrel{\varphi^{-1}}{\longrightarrow} V_\star\otimes_{A_\star}\Omega_\star
$
(where for ease of notation we dropped the module indices on $\varphi$)
canonically leads to the isomorphism 
$\Hom_\bbK(V_\star,\, (V\otimes_A\Omega)_\star)
\stackrel{\varphi^{-1}}{\longrightarrow}\Hom_{\bbK}(V_\star,\,V_\star\otimes_{A_\star}\Omega_\star)
\,.
$
Composition of the quantization map 
\eq D_\FF ~:~\Hom_\bbK(V, \,V\otimes_{A}\Omega)_\star\longrightarrow
\Hom_{\bbK}(V_\star,\, (V\otimes_A\Omega)_\star)
\en
with this isomorphism gives the left $H^\FF$-module $A_\st$-bimodule  isomorphism
\begin{flalign}\label{DtildeFHOM}
\widetilde{D}_\FF :=\varphi^{-1}\!\circ\! D_\FF~:~
\Hom_\bbK(V, V\otimes_A \Omega)_\star\longrightarrow
\Hom_{\bbK}(V_\star, V_\star\otimes_{A_\star} \Omega_\star)~.
\end{flalign}
\begin{Theorem}\label{theo:condef}
The isomorphism (\ref{DtildeFHOM})
restricts to the left $H^\FF$-module $A_\st$-bimodule  isomorphism
\eq
\widetilde{D}_\FF~:~
\Hom_A(V, V\otimes_A \Omega)_\star\longrightarrow
\Hom_{A_\star}(V_\star, V_\star\otimes_{A_\star} \Omega_\star)~\label{tildeDFHOMA}~~,~~~~
P~\longmapsto\varphi^{-1} \circ (\of^\al\btrgl P)\circ \of_\al\ra\,~
\en
and to the affine space  isomorphism 
\eq
\widetilde{D}_\FF ~:~
\Con_A(V)\longrightarrow
\Con_{A_\star}(V_\star)~~~,~~~~
\label{tildeDFHOMAproof}\dd~\longmapsto\varphi^{-1} \circ
(\of^\al\btrgl \dd)\circ \of_\al\ra\,~,
\en
where
$\Con_A(V)$ and $\Con_{A_\star}(V_\star)$ are
respectively affine spaces over the isomorphic 
modules
$\Hom_A(V, V\otimes_A \Omega)_\st$ and 
$\Hom_{A_\star}(V_\star, V_\star\otimes_{A_\star} \Omega_\star)$
of right $A$-linear, respectively $A_\st$-linear morphisms.
\end{Theorem}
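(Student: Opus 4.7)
The plan is to prove the two restriction claims (\ref{tildeDFHOMA}) and (\ref{tildeDFHOMAproof}) in sequence, and then assemble them into the affine isomorphism statement. Part (a) (restriction to $A$-linear morphisms) can be obtained by promoting Theorem \ref{PDP} from $\End_A$ to $\Hom_A$, whose proof is identical: $D_\FF$ is a left $H^\FF$-module $A_\st$-bimodule isomorphism $\Hom_A(V,V\otimes_A\Omega)_\st \to \Hom_A(V_\st,(V\otimes_A\Omega)_\st)$. Post-composition with $\varphi^{-1}$, which is an $H^\FF$-equivariant $A_\st$-bimodule isomorphism $(V\otimes_A\Omega)_\st \to V_\st\otimes_{A_\st}\Omega_\st$, then carries this image into $\Hom_{A_\st}(V_\st,V_\st\otimes_{A_\st}\Omega_\st)$, with explicit inverse $D_\FF^{-1}\circ\varphi$.

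For part (b), the content is to check the right $A_\st$-Leibniz rule for $\widetilde{D}_\FF(\dd)$, assuming $\dd$ satisfies the undeformed one. The key computation is to expand
\begin{flalign*}
\widetilde{D}_\FF(\dd)(v\st a) = \varphi^{-1}\!\circ\bigl(\of^\al{}_1\trgl\bigr)\circ \dd\bigl(S(\of^\al{}_2)\of_\al\trgl(v\st a)\bigr),
\end{flalign*}
substitute $v\st a = (\of^\be\trgl v)\cdot(\of_\be\trgl a)$ and use bimodule covariance $\xi\trgl(v\cdot a) = (\xi_1\trgl v)\cdot(\xi_2\trgl a)$ to distribute the remaining twist legs across the product. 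The undeformed Leibniz rule then splits the expression into a term of the form $\dd(v')\cdot a'$ and a term $v'\otimes_A \dif a'$. Repeated application of the $2$-cocycle identity written as (\ref{ass}), the counital normalization (\ref{propF2}), and the equivariance $\xi\trgl\dif a=\dif(\xi\trgl a)$ of the differential, together with reinserting $1\otimes 1 = \FF^{-1}\FF$ at strategic places (as in the proof of Theorem \ref{PDP0}), rearranges the first term into $\widetilde{D}_\FF(\dd)(v)\st a$ and the second into $v\otimes_{A_\st}\dif a$.

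The affine-space isomorphism then follows from $\bbK$-linearity of $\widetilde{D}_\FF$: fixing any $\dd_0\in\Con_A(V)$, every $\dd\in\Con_A(V)$ writes uniquely as $\dd_0+P$ with $P\in\Hom_A(V,V\otimes_A\Omega)$, so
\begin{flalign*}
\widetilde{D}_\FF(\dd)=\widetilde{D}_\FF(\dd_0)+\widetilde{D}_\FF(P)
\end{flalign*}
is the sum of the connection $\widetilde{D}_\FF(\dd_0)$ and the $A_\st$-linear morphism $\widetilde{D}_\FF(P)$ from part (a), hence a connection; the surjection onto $\Con_{A_\st}(V_\st)$ is obtained symmetrically by running the same argument for the inverse twist $\FF^{-1}$ of $H^\FF$. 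The main obstacle is the Sweedler-level bookkeeping in part (b): the twist legs, antipodes, and the $\varphi^{-1}=\FF\trgl$ factor all need to be tracked through the Leibniz step, with the $2$-cocycle and normalization identities applied at precisely the right places in order to cleanly split the output into its ``connection part'' and its ``differential part.'' Once that calculation is done, everything else is categorical packaging already established in Theorems \ref{PDP} and \ref{theo:promodhomdef}.
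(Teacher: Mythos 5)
Your proposal is correct and follows exactly the route the paper sets up: the paper states Theorem \ref{theo:condef} without an explicit proof (deferring details to the joint work with Schenkel), but its construction of $\widetilde{D}_\FF=\varphi^{-1}\circ D_\FF$ together with Theorems \ref{PDP0} and \ref{PDP} are precisely the ingredients you assemble, and your affine-space argument for surjectivity is the intended one. The only part left implicit in your write-up is the actual Sweedler-level computation verifying the deformed Leibniz rule in step (b), but the identities you invoke --- the $2$-cocycle condition (\ref{ass}), the normalization (\ref{propF2}), the bimodule covariance (\ref{eqn:moduleHcovariance}), and the equivariance of $\dif$ (\ref{eqn:equivar}) --- are exactly the ones that make that calculation close.
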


\subsection{Connections on tensor product modules (sum of connections)
}
Connections on quasi-commutative bimodules can be summed to give
connections on tensor product modules.

Let $(H,\RR)$ be a quasitriangular Hopf algebra.
A  left $H$-covariant differential calculus 
$\bigl(\Omega^\bullet,\wedge,\dif\bigr)$  over $A\in \AAAlg{H\!\!}{}{}$ is called 
{\bf graded quasi-commutative} if the algebra $\Omega^\bullet$ is
graded quasi-commutative, i.e., for all $\theta,\theta^\prime\in \Omega^\bullet$ of
homogeneous degree,
\eq\label{gradedqc}
\theta\wedge\theta^\prime=(-1)^{\deg(\theta)\deg(\theta^\prime)} (\oR^\al\ra\theta^\prime)\wedge
(\oR_\al\ra \theta)~.
\en
It can be shown that any left $H$-covariant differential calculus 
$\bigl(\Omega^\bullet,\wedge,\dif\bigr)$ over a quasi-commutative
algebra $A$ is graded quasi-commutative if the bimodule of one-forms $\Omega$ is 
quasi-commutative (and generates $\Omega^n$ for all $n> 1$). 

\begin{Proposition} 
Let $(H,\RR)$ be a quasi-triangular Hopf algebra and 
$\bigl(\Omega^\bullet,\wedge,\dif\bigr)$ be graded quasi-commutative.
A right
connection $\dd$ on a quasi-commutative $A$-bimodule $W\in\MMMod{H\!\!\!}{A}{A}$
is also a {\bf quasi-left connection}, in the sense that we have the
braided Leibniz rule,
for all $a\in A$ and $w\in W$, 
\eq\label{ltc}
\nabla(a\cdot w)=(\oR^\al\ra a)\cdot (\oR_\al\RA
\nabla)(w)+(\R_\al\ra w)\otimes_A(\R^\al\ra \dif a)~.
\en
\end{Proposition}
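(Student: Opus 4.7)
The plan is to start from $\nabla(a\cdot w)$, use quasi-commutativity of the bimodule $W$ to convert the left action by $a$ into a right action, apply the right Leibniz rule, and then match the resulting term to the desired form via standard $R$-matrix identities.

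First, quasi-commutativity of $W$ together with $\RR^{-1}\RR=1\otimes 1$ gives $a\cdot w=(\R_\al\ra w)\cdot(\R^\al\ra a)$. Applying $\nabla$, using the right Leibniz rule (\ref{eqn:rightcon}) and the equivariance of $\dif$ (\ref{eqn:equivar}), then yields
\begin{equation*}
\nabla(a\cdot w)=\nabla(\R_\al\ra w)\cdot(\R^\al\ra a)+(\R_\al\ra w)\otimes_A(\R^\al\ra \dif a)~,
\end{equation*}
whose second term is exactly the Leibniz-like summand of (\ref{ltc}). It therefore remains to identify the first term with $(\oR^\be\ra a)\cdot(\oR_\be\RA\nabla)(w)$.

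As a preliminary step I would verify that $W\otimes_A\Omega$ is quasi-commutative as an $A$-bimodule: a direct computation on $w\otimes_A\om$ pushes $b\in A$ first across $\Omega$ and then across $W$ using quasi-commutativity of each factor, and the quasitriangularity identity $(\id\otimes\Delta)\RR^{-1}=\RR^{-1}_{12}\RR^{-1}_{13}$ recombines the result into $(\oR^\ga\ra b)\cdot(\oR_\ga\ra(w\otimes_A\om))$. Applied to $\Theta=\nabla(\R_\al\ra w)\in W\otimes_A\Omega$ with $b=\R^\al\ra a$ this rewrites the remaining term as
\begin{equation*}
\nabla(\R_\al\ra w)\cdot(\R^\al\ra a)=(\oR^\be\R^\al\ra a)\cdot\bigl(\oR_\be\ra\nabla(\R_\al\ra w)\bigr)~.
\end{equation*}

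On the other hand, unfolding the adjoint action (\ref{xiadjactP}) gives $(\oR_\be\RA\nabla)(w)=(\oR_\be)_{1}\ra\nabla\bigl(S((\oR_\be)_{2})\ra w\bigr)$. Applying $(\id\otimes\Delta)\RR^{-1}=\RR^{-1}_{12}\RR^{-1}_{13}$ to split $\oR^\be\otimes\Delta(\oR_\be)$ and then the quasitriangularity identity $(\id\otimes S)\RR^{-1}=\RR$ (a consequence of $(S\otimes\id)\RR=\RR^{-1}$) to replace $\oR^\ga\otimes S(\oR_\ga)$ by $\R^\al\otimes\R_\al$, one finds that $(\oR^\be\ra a)\cdot(\oR_\be\RA\nabla)(w)$ equals the very same expression $(\oR^\be\R^\al\ra a)\cdot\bigl(\oR_\be\ra\nabla(\R_\al\ra w)\bigr)$, closing the argument. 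The main technical obstacle is this last bookkeeping step: the two independent summations $\al,\be$ on one side must be re-identified with the coproduct of a single $\oR_\be$ composed with the antipode on the other; once the two $R$-matrix identities are combined, the two sides coincide term by term.
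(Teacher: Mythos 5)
Your proof is correct. The paper itself states this Proposition without proof (the detailed arguments are deferred to the joint work with Schenkel), but your argument is exactly the one the paper gestures at when it remarks that the property ``parallels the quasi-left $A$-linearity property (\ref{twlefAlin})'': flip $a$ to the right of $w$ using quasi-commutativity of $W$, apply the right Leibniz rule and equivariance of $\dif$, and then flip back using quasi-commutativity of $W\otimes_A\Omega$ together with the standard quasitriangularity identities $(\id\otimes\Delta)\RR^{-1}=\RR^{-1}_{12}\RR^{-1}_{13}$ and $(\id\otimes S)\RR^{-1}=\RR$. All steps, including the final bookkeeping that matches the two double sums, check out.
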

\noi This property parallels the quasi-left $A$-linearity property
(\ref{twlefAlin}) of right $A$-linear morphisms.
Notice that if $\dd$ is $H$-equivariant we recover the notion of $A$-bimodule
connection \cite{DuboisViolette:1995hh},
\cite{Madore:2000aq} Section 3.6.
\sk

\begin{Lemma}\label{hexagonA1}
Let $W\in \MMMod{H\!\!\!}{A}{A}$ be quasi-commutative  and 
$\Omega\in  \MMMod{H\!\!\!}{A}{A}$ be the bimodule of one-forms of a 
left $H$-covariant graded quasi-commutative differential calculus  $\bigl(\Omega^\bullet,\wedge,\dif\bigr)$. 
Then the inverse braiding map  $\tau^{-1}_\RR: \Omega\otimes W\rightarrow W\otimes \Omega$ 
(recall definition (\ref{eqn:Rflipmap})) canonically induces a left $H$-module $A$-bimodule
isomorphism on the quotient
\begin{flalign}
\tau^{-1}_\RR: \Omega\otimes_A W\longrightarrow W\otimes_A \Omega ~~,~~~
\theta\otimes_A w\mapsto \tau_\RR^{-1}(\theta\otimes_A w)= (\R_\alpha\ra w) \otimes_A (\R^\alpha\ra \theta)~.
\end{flalign}
This satisfies the braid relation
$\tau^{-1}_{\RR\,1(23)}=\tau^{-1}_{\RR\,23}\circ\tau^{-1}_{\RR\,12}$ that is induced from
the braid relation (\ref{hexagon}).
\end{Lemma}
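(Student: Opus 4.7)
The plan is to verify that $\tau^{-1}_\RR\colon\Omega\otimes W\to W\otimes\Omega$, already introduced on the unquotiented tensor product, descends canonically to the quotient over $A$, and then to inherit all remaining properties from those of $\tau_\RR$ on the full tensor product.

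The first and essential step is to show that for $a\in A$, $\theta\in\Omega$, $w\in W$ the element $\tau^{-1}_\RR(\theta\cdot a\otimes w-\theta\otimes a\cdot w)$ lies in the $\bfK$-submodule $I_{W,\Omega}\subset W\otimes\Omega$ generated by the expressions $w'\cdot a'\otimes \eta-w'\otimes a'\cdot\eta$. I would expand $\tau^{-1}_\RR(\theta\cdot a\otimes w)=(\R_\alpha\ra w)\otimes(\R^\alpha\ra(\theta\cdot a))$ using the $H$-covariance (\ref{eqn:moduleHcovariance}) of the right $A$-action on $\Omega$, and then use the quasi-commutativity of $\Omega$, which follows from graded quasi-commutativity (\ref{gradedqc}) applied in degree $(1,0)$, to move the factor generated by $a$ to the left, picking up braiding $R$-matrices. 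A parallel computation for $\tau^{-1}_\RR(\theta\otimes a\cdot w)=(\R_\alpha\ra(a\cdot w))\otimes(\R^\alpha\ra\theta)$ uses covariance in the other slot together with the rewriting $a\cdot w=(\R_\gamma\ra w)\cdot(\R^\gamma\ra a)$ that is dual to (\ref{QCrightact}). Working modulo $I_{W,\Omega}$, matching the two expressions reduces to an identity in $H^{\otimes 3}$ that is a direct consequence of the coproduct identities $(\D\otimes\id)\RR=\RR_{13}\RR_{23}$ and $(\id\otimes\D)\RR=\RR_{13}\RR_{12}$ together with the Yang--Baxter equation for $\RR$. This $R$-matrix bookkeeping is where I expect the main technical obstacle to lie.

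Once descent is established, right $A$-linearity of the induced map is a direct consequence of its explicit formula together with quasi-commutativity; left $A$-linearity then follows because on quasi-commutative bimodules right $A$-linear maps are automatically quasi-left $A$-linear in the sense of (\ref{twlefAlin}). $H$-equivariance is inherited from the $H$-equivariance of $\tau_\RR$ on $\Omega\otimes W$, which is simply the standard statement that a universal $R$-matrix intertwines the two coproducts of a quasitriangular Hopf algebra. The induced map is an isomorphism because the same argument, applied in the opposite direction to $\tau_\RR:W\otimes\Omega\to\Omega\otimes W$, yields a two-sided inverse on the quotient.

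For the braid relation, I would simply observe that once each of the factor maps $\tau^{-1}_{\RR\,12}$ and $\tau^{-1}_{\RR\,23}$ is known to descend to the corresponding quotient over $A$, the identity $\tau^{-1}_{\RR\,1(23)}=\tau^{-1}_{\RR\,23}\circ\tau^{-1}_{\RR\,12}$ on the $\bfK$-level triple tensor product from (\ref{hexagon}) passes term-by-term to the triple tensor product over $A$, since it can be tested on classes of simple tensors and each composition is by construction well defined there.
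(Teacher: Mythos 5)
Your overall architecture is the right one: descend $\tau^{-1}_\RR$ to the quotient, then inherit $A$-bilinearity, $H$-equivariance, invertibility and the braid relation from the corresponding statements on $\Omega\otimes W$. (The paper states the lemma without proof, so there is nothing to match line by line.) However, the step you yourself single out as the technical crux is resolved incorrectly. The matching of the two expressions modulo the submodule generated by $w'\cdot a'\otimes\eta-w'\otimes a'\cdot\eta$ does \emph{not} reduce to an identity in $H^{\otimes 3}$ provable from $(\Delta\otimes\id)\RR=\RR_{13}\RR_{23}$, $(\id\otimes\Delta)\RR=\RR_{13}\RR_{12}$ and the Yang--Baxter equation. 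Carrying out exactly the computation you describe, $\tau^{-1}_\RR(\theta\cdot a\otimes w)$ reduces modulo the quotient relation to the action of $\RR^{-1}_{23}\RR_{31}\RR_{21}$ on $w\otimes a\otimes\theta$ followed by contracting the first two slots with the $A$-action, while $\tau^{-1}_\RR(\theta\otimes a\cdot w)$ reduces to the action of $\RR_{21}\RR_{31}\RR_{32}=\RR_{32}\RR_{31}\RR_{21}$ (permuted Yang--Baxter) followed by the same contraction. The two differ by the residual factor $\RR_{32}\RR_{23}$, i.e.\ by $\RR_{21}\RR$ acting on the $(A,\Omega)$ slots before the $A$-action on $\Omega$ is applied; this factor is the identity only when $\RR$ is \emph{triangular}. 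For a genuinely quasitriangular $\RR$, the braiding does not descend to $\otimes_A$ for modules satisfying only the one-sided relation (\ref{QCrightact}) --- this is the well-known ``dyslexia'' obstruction for modules over commutative algebras in braided categories, and the same residue reappears in your right $A$-linearity check.

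What rescues the lemma under its stated hypotheses is an input you never invoke: graded quasi-commutativity (\ref{gradedqc}) of the whole calculus gives \emph{both} orderings, $\theta\wedge a=(\oR^\al\ra a)\wedge(\oR_\al\ra\theta)$ and $a\wedge\theta=(\oR^\al\ra\theta)\wedge(\oR_\al\ra a)$, and combining the two yields the extra identity $\cdot\circ(\RR_{21}\RR)\ra=\cdot$ for the $A$-action on $\Omega$ --- exactly the phenomenon the paper points out for the product of a quasi-commutative algebra right after (\ref{QCalg}). This identity is precisely what kills the residual factor $\RR_{21}\RR$ in both the descent and the $A$-linearity computations. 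So you must either derive and use this ``locality'' identity for $\Omega$ from the two-sided graded quasi-commutativity, or restrict to triangular $\RR$ (which is the setting of all the paper's examples); quasitriangularity plus (\ref{QCrightact}) alone does not suffice. A secondary, minor point: quasi-left $A$-linearity (\ref{twlefAlin}) yields genuine left $A$-linearity only for $H$-equivariant maps, so it must be combined with the $H$-equivariance of $\tau^{-1}_\RR$ (which you do establish) rather than cited on its own.
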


Given two connections $\dd_V: V\rightarrow V\otimes_A \Omega$ and 
$\dd_W: W\rightarrow W\otimes_A \Omega$ we now construct the connection
$\dd_V\oplus_\RR\dd_W: V\otimes_A W\rightarrow V\otimes_A W
\otimes_A\Omega$ on the tensor product module $V\otimes_A W$.
Since connections are $\bbK$-linear maps and not $A$-linear we
actually have to consider their sum on $V\otimes W$; only later we can
then consider the tensor product $V\otimes_A W$.

\begin{Theorem}{$\!$\bf{(Sum of connections).}}
\label{theo:conplus}
 Let $(H,\RR)$ be a {quasitriangular} Hopf algebra and $A\in \AAAlg{H\!\!}{}{}$, $V,W\in \MMMod{H\!\!\!}{A}{A}$ be quasi-commutative. Let also
 $\bigl(\Omega^\bullet,\wedge,\dif\bigr)$ be a graded
 quasi-commutative left $H$-covariant differential calculus over $A$,
 $\dd_V\in \Con_A(V)$ and
 $\dd_W\in\Con_A(W)$. Consider the $\bfK$-linear map
$\dd_V {\;\widehat{\oplus}_\RR\,} \dd_W \,:\, V\otimes W\to V\otimes_A
W\otimes_A\Omega$ defined by
\begin{flalign}\label{SIGMA}
 \dd_V{\;\widehat{\oplus}_\RR\,} \dd_W:=\tau^{-1}_{\RR\,23}\circ\pi\circ(\dd_V\otimes_\RR\id) 
+\pi\circ( \id\otimes_\RR\dd_W) ~,
\end{flalign}
 where $\pi$ denotes  the projections 
$V\otimes_A\Omega\otimes W\to V\otimes_A\Omega\otimes_A W$
and $V\otimes\Omega\otimes_A W\to V\otimes_A\Omega\otimes_A W$,
and
$\tau^{-1}_{\RR\,23}$
is the inverse
braiding map acting on the second and third entry of the tensor product
$V\otimes_A\Omega\otimes_A W$.

The map $ \dd_V{\;\widehat{\oplus}_\RR\,} \dd_W$ induces a 
connection on the quotient module $ V\otimes_A W$,
\eq \label{eqn:tensorcon}
\dd_V \oplus_\RR \dd_W : V\otimes_A W\to V\otimes_A W\otimes_A\Omega~,
\en
defined by, for all $v\in V, w\in W$, $\,(\dd_V \oplus_\RR \dd_W)
(v\otimes_A w):=(\dd_V{\;\widehat{\oplus}_\RR\,} \dd_W)(v\otimes w)$,
and extended by linearity to all $V\otimes_AW$.
\end{Theorem}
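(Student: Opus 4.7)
The heart of the theorem is showing that $\dd_V\,\widehat{\oplus}_\RR\,\dd_W$ descends to a well-defined $\bbK$-linear map on the quotient $V\otimes_A W$; this is the crucial point. I would check that
$(\dd_V\widehat{\oplus}_\RR\dd_W)(v\cdot a\otimes w)$ and $(\dd_V\widehat{\oplus}_\RR\dd_W)(v\otimes a\cdot w)$ coincide in $V\otimes_A W\otimes_A\Omega$.  Using the normalization of $\RR$ one has $\dd_V\otimes_\RR\id=\dd_V\otimes \id$, so by the right Leibniz rule for $\dd_V$ and the $A$-balancing in the quotient $V\otimes_A\Omega\otimes_A W$, the first summand of the difference reduces to the excess term
$\tau^{-1}_{\RR,23}(v\otimes_A \dif a\otimes_A w)=v\otimes_A(\R_\al\ra w)\otimes_A(\R^\al\ra \dif a).$
For the second summand, I would expand the adjoint action inside $(\oR_\al\RA\dd_W)(a\cdot w)$, invoke the coproduct compatibility $\xi\ra(a\cdot w)=(\xi_1\ra a)\cdot(\xi_2\ra w)$, the right Leibniz rule for $\dd_W$, and equivariance of $\dif$, and then rearrange the resulting $R$-matrix legs via the quasitriangularity identity $(\Delta\otimes\id)\RR=\RR_{13}\RR_{23}$.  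Quasi-commutativity of $V$ and $W$ together with the hexagon relation of Lemma \ref{hexagonA1} then collapse the extra contribution to the negative of the excess term above, yielding the claimed cancellation and hence well-definedness on the quotient.

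\textbf{Step 2 (right Leibniz rule).}  Next I would compute $(\dd_V\oplus_\RR\dd_W)(v\otimes_A (w\cdot a))$ and show that it equals $(\dd_V\oplus_\RR\dd_W)(v\otimes_A w)\cdot a + v\otimes_A w\otimes_A \dif a$.  Since $\tau^{-1}_{\RR,23}$ is right $A$-linear (Lemma \ref{hexagonA1}) and $\pi$ is $A$-balanced, the first summand of $\widehat{\oplus}_\RR$ contributes $(\tau^{-1}_{\RR,23}\circ\pi\circ(\dd_V\otimes\id))(v\otimes w)\cdot a$, with no excess term.  For the second summand I would unpack $(\oR_\al\RA\dd_W)(w\cdot a)$ by inserting the right Leibniz rule for $\dd_W$ inside the adjoint action and using equivariance of $\dif$.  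Reassembling with the quasitriangularity identities then splits the result into $\pi\bigl((\oR^\al\ra v)\otimes(\oR_\al\RA\dd_W)(w)\bigr)\cdot a + v\otimes_A w\otimes_A\dif a$, which summed with the first contribution gives exactly the desired Leibniz rule.

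\textbf{Main obstacle.}  The principal difficulty is in Step 1: the bookkeeping of iterated coproducts and antipodes on the $R$-matrix legs needed to match the two cancelling contributions.  The term $v\otimes_A(\R_\al\ra w)\otimes_A(\R^\al\ra \dif a)$ coming from the braiding $\tau^{-1}_{\RR,23}$ must be balanced by a term produced by the adjoint action on $\dd_W(a\cdot w)$, and this matching closes only after simultaneously exploiting the quasi-left Leibniz rule of the Proposition, the quasitriangularity axiom $(\Delta\otimes\id)\RR=\RR_{13}\RR_{23}$, and the hexagon identity of Lemma \ref{hexagonA1}.  Step 2 involves milder bookkeeping since no $A$-balancing in the target needs to be exploited, only the Leibniz rule of $\dd_W$ and the covariance of $\dif$ under the adjoint action.
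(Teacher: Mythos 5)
Your proposal is correct and follows the same route the paper indicates: the paper gives no detailed proof but states that the argument rests on the braided Leibniz rule (\ref{ltc}), which is exactly the ingredient you use (together with the normalization $\dd_V\otimes_\RR\id=\dd_V\otimes\id$, quasitriangularity, and Lemma \ref{hexagonA1}) to cancel the excess term $v\otimes_A(\R_\al\ra w)\otimes_A(\R^\al\ra\dif a)$ in the well-definedness check and to verify the right Leibniz rule.
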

\noi The proof of this theorem relies on   the
braided Leibniz rule (\ref{ltc}).

The properties of the $\otimes_\RR$-tensor product imply that the sum
of connections is compatible with the Hopf algebra action, for all $\xi\in H$,
$\xi\RA (\dd_V\oplus_\RR\dd_W) = (\xi\RA\dd_V) \oplus_\RR (\xi\RA\dd_W)~,
$
and that it is associative, 
\begin{flalign}\label{eqn:tensorconass}
\bigl(\dd_V\oplus_\RR \dd_W\bigr)\oplus_\RR \dd_Z = \dd_V\oplus_\RR\bigl(\dd_W\oplus_\RR\dd_Z\bigr)~.
\end{flalign}
This latter property uses also the braid relation $\tau^{-1}_{\RR\,2(34)}=\tau^{-1}_{\RR\,34}\circ\tau^{-1}_{\RR\,23}$ of Lemma \ref{hexagonA1}.

\sk
We end this section by mentioning connections on dual modules
$V'=\Hom_A(V,A)$ (with $V$  finitely
generated and projective). If
$V\in \MMMod{H\!\!}{}{A}$ then $V^\prime \in \MMMod{H\!\!\!}{A}{}$, and a
right connection $\dd_V\in \Con_A(V)$  induces a canonical {\sl left} connection on $V'$, 
$\dd':V'\longrightarrow \Omega\otimes_A V'$. We denote by $\le
v',v\re$ the evaluation of $v'$ on $v$, then (with obvious abuse of
notation)  $\dd'$ is defined by, for all $v'\in V, v\in
V$,
\eq\le\dd'v',v\re= \dif\le v',v\re-\le v'\otimes_A \id,\dd_Vv\re ~,
\en
and satisfies the left
Leibniz rule $\dd'(av')=\dif a\otimes_A v'+a\dd'(v')$.  If we now
consider a quasi-commutative $A$-bimodule $V\in
\MMMod{H\!\!\!}{A}{A}$ and the  Hopf algebra $H$ is triangular we also have
a canonical right connection $\dd_{V'}\in\Con_{A}(V^\prime)$, defined by
\eq\le\dd_{\!V'}v',v\re=\dif\le v',v\re-\le(\oR^\al\ra v')\otimes_A \id,(\oR_\al\RA \dd_V)v\re
~.\en 
{}For example if  $V$ is the module of vector fields on a  noncommutative manifold
then $V'$ is that of one-forms, and given $\dd\in\Con_A(V)$, we can
then consider $\dd_V\oplus_\RR \dd_{V'}$, the
connection on  covariant and contravariant tensor fields $V\otimes_A V'$.

\sk
\subsection{Twisting sums of connections}
The twist quantized  sum of connections, 
$D_\FF(\dd_V\oplus_\RR\dd_W):(V\otimes_A W)_\star \to
(V\otimes_A W\otimes_A\Omega)_\star$ is by construction a $\bfK$-linear
map. As in the case of Theorems \ref{theo:promodhomdef}  and \ref{theo:prodcondef2}, up to $\varphi$-isomorphisms it is a connection.

\begin{Theorem}\label{theo:prodcondef}
Let $(H,\RR)$ be a quasitriangular Hopf algebra with twist $\FF\in H\otimes H$ and 
$A\in \AAAlg{H\!\!}{}{}$, $V, W\in \MMMod{H\!\!\!}{A}{A}$ be quasi-commutative. Let further 
$\bigl(\Omega^\bullet,\wedge,\dif\bigr)$ be a graded quasi-commutative left $H$-covariant differential calculus 
$\dd_V\in \Con_A(V)$ and $\dd_W\in\Con_A(W)$. Then the following diagram commutes
\begin{flalign}\label{eqn:quantprodcon}
\xymatrix{
V_\star \otimes_{A_\star} W_\star \ar[rrrr]^-{\widetilde{D}_\FF(\dd_V)\oplus_{\RR^\FF} \widetilde{D}_\FF(\dd_W)} \ar[d]_-{\varphi_{V,W}}& & & & V_\star \otimes_{A_\star} W_\star \otimes_{A_\star} \Omega_\star \ar[d]^-{\varphi_{V,W,\Omega}}\\
(V\otimes_A W)_\star \ar[rrrr]_-{D_\FF(\dd_V\oplus_\RR\dd_W)} & & & & (V\otimes_A W\otimes_A\Omega)_\star 
} 
\end{flalign}
where $\varphi_{V,W,\Omega}=\varphi_{V\otimes_A W,\Omega}\circ
(\varphi_{V,W}\otimes_\RR\id_{\Omega_\st})$ is the diagonal in the
commutative square (\ref{eqn:higheriotast}) (with the substitutions
$\otimes_\st\rightarrow \otimes_{A_\st}$).
\end{Theorem}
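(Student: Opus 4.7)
The plan is to verify commutativity of \eqref{eqn:quantprodcon} by splitting both $\dd_V\oplus_\RR\dd_W$ and $\widetilde{D}_\FF(\dd_V)\oplus_{\RR^\FF}\widetilde{D}_\FF(\dd_W)$ into their two defining summands according to \eqref{SIGMA}, and then reducing each summand to Theorem \ref{theo:prodcondef2}. By $\bbK$-linearity of $D_\FF$, $\widetilde{D}_\FF$ and $\varphi$ one has
\[
\dd_V\oplus_\RR\dd_W = \tau^{-1}_{\RR\,23}\circ\pi\circ(\dd_V\otimes_\RR\id_W) + \pi\circ(\id_V\otimes_\RR\dd_W)~,
\]
with an analogous expression in the deformed category, obtained by replacing $\RR$ by $\RR^\FF$, $\otimes_A$ by $\otimes_{A_\star}$, and the projections and braiding by their starred counterparts. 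Thus it suffices to match each summand separately through the vertical $\varphi$-isomorphisms.

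For the summand $\pi\circ(\id_V\otimes_\RR\dd_W)$, I would apply Theorem \ref{theo:prodcondef2} with $P=\id_V$ and $Q=\dd_W$. Because $\id_V$ is $H$-equivariant, the normalization \eqref{propF2} collapses the bottom horizontal map in \eqref{eqn:promodhomdef} to $D_\FF(\id_V\otimes_\RR\dd_W)$ and the top horizontal map to $\id_{V_\star}\otimes_{\RR^\FF}\widetilde{D}_\FF(\dd_W)$. Post-composing with $\pi$, which is itself $H$-equivariant and $A$-bilinear (hence $\widetilde{D}_\FF$-quantizes to the canonical projection in the starred category), then matches the $\dd_W$-summand of the quantized sum of connections. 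The same strategy, applied with $P=\dd_V$ and $Q=\id_W$, handles $\pi\circ(\dd_V\otimes_\RR\id_W)$; the additional outer factor $\tau^{-1}_{\RR\,23}$ is $H$-equivariant by Lemma \ref{hexagonA1}, so by the equivalence of categories of $H$-equivariant morphisms recalled after Theorem \ref{Theorem2} its $\widetilde{D}_\FF$-transform is exactly the deformed braiding $\tau^{-1}_{\RR^\FF\,23}$.

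To stitch the two sub-diagrams into the single diagram \eqref{eqn:quantprodcon}, I would invoke the coherence \eqref{eqn:higheriotast} of $\varphi$ on triple tensor products, which identifies the right-column isomorphism $\varphi_{V,W,\Omega}=\varphi_{V\otimes_A W,\Omega}\circ(\varphi_{V,W}\otimes_\RR\id_{\Omega_\star})$ with the alternative decomposition $\varphi_{V,W\otimes_A\Omega}\circ(\id_{V_\star}\otimes_\RR\varphi_{W,\Omega})$. This way each summand has a natural landing site in the appropriate ordering of the triple tensor product before being reshuffled by $\tau^{-1}_{\RR^\FF\,23}$ onto the common target $(V\otimes_A W\otimes_A\Omega)_\star$.

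The hard part will be the careful bookkeeping of the $\varphi$-isomorphisms on the two orderings $V_\star\otimes_{A_\star}\Omega_\star\otimes_{A_\star}W_\star$ and $V_\star\otimes_{A_\star}W_\star\otimes_{A_\star}\Omega_\star$ of the triple tensor product, and the verification that the $H$-equivariant structural maps $\pi$ and $\tau^{-1}_\RR$ really do intertwine the starred and unstarred monoidal structures through $\varphi$ as expected. Once these coherences are nailed down, the theorem reduces to assembling the two instances of Theorem \ref{theo:prodcondef2}, for $(\id_V,\dd_W)$ and for $(\dd_V,\id_W)$ respectively, into a single diagram chase.
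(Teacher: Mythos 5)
The paper itself states this theorem without proof (the details are deferred to \cite{AS}); the only guidance it gives is the remark preceding the statement, which points to exactly the reduction you propose: split the sum of connections into its two defining summands from (\ref{SIGMA}), quantize each summand via the machinery of Theorems \ref{theo:promodhomdef} and \ref{theo:prodcondef2} with one leg equal to the identity (so that the normalization (\ref{propF2}) collapses the twist factors), and reassemble using the coherence (\ref{eqn:higheriotast}) of the $\varphi$'s together with the compatibility of the braiding $\tau^{-1}_{\RR^\FF}$ with $\tau^{-1}_{\RR}$ through $\varphi$. Your overall strategy is therefore the intended one.

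Two steps would nevertheless fail as literally written. First, you cannot invoke Theorem \ref{theo:prodcondef2} with $Q=\dd_W$ (or $P=\dd_V$): that theorem is the restriction of Theorem \ref{theo:promodhomdef} to \emph{right $A$-linear} maps, and a connection is not right $A$-linear --- it obeys the Leibniz rule (\ref{eqn:rightcon}). The correct tool for the legs carrying a connection is the $\bbK$-linear Theorem \ref{theo:promodhomdef}, applied on the unquotiented tensor products exactly as in the construction (\ref{SIGMA}), followed by a separate descent argument. Second, and relatedly, the two summands $\tau^{-1}_{\RR\,23}\circ\pi\circ(\dd_V\otimes_\RR\id)$ and $\pi\circ(\id\otimes_\RR\dd_W)$ do \emph{not} individually descend to the quotient $V\otimes_A W$: evaluated on $v\cdot a\otimes w-v\otimes a\cdot w$, each summand produces a Leibniz term in $\dif a$, and these terms only match up in the sum (this is precisely the content of Theorem \ref{theo:conplus} via the braided Leibniz rule (\ref{ltc})). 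Hence ``matching each summand separately through the vertical $\varphi$-isomorphisms'' of diagram (\ref{eqn:quantprodcon}) is not meaningful on $V_\star\otimes_{A_\star}W_\star$; the summand-by-summand comparison must be carried out at the level $V_\star\otimes_\star W_\star\to (V\otimes W)_\star$, where each summand is a well-defined $\bbK$-linear map and where $D_\FF(P\circ Q)=P\circ D_\FF(Q)$ for the $H$-equivariant structural maps $P=\pi$ and $P=\tau^{-1}_{\RR\,23}$; only the completed commuting square is then pushed down to the $\otimes_A$-quotients using the compatibility of $\varphi$ with the canonical projections. With these two repairs, and keeping track of the extra $\varphi^{-1}$ hidden in $\widetilde{D}_\FF=\varphi^{-1}\circ D_\FF$ on the upper horizontal arrow, your argument goes through.
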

\noi A special case of the above diagram is when $\RR=1\otimes 1$ and $A$
is commutative. Then the lower horizontal arrow is the quantization of  
a usual sum of connections on $A$-bimodules. The upper 
horizontal arrow is then the sum of noncommutative connections 
on quasi-commutative bimodules in $\MMMod{H^\FF\!\!\!}{A_\st}{A_\st}$,
where the Hopf algebra $H^\FF$ has triangular $\RR$-matrix
$\RR^\FF=\FF_{21}\FF^{-1}$.


\section{Curvature}

\subsection{Curvature of connections and of sum of connections}
A connection $\dd : V\longrightarrow V\otimes_A\Omega$ on a right
$A$-module $V$ can be extended
to a well defined $\bbK$-linear map $\dd : V\otimes_A \Omega^\bullet\longrightarrow
V\otimes_A\Omega^\bullet$ by setting, for all $v\in V, \theta\in
\Omega^\bullet$,
\begin{flalign}\label{eqn:nabliftinduced}
\nabla( v\otimes_A\theta ) = (\nabla v)\wedge \theta + v\otimes_A \dif \theta ~.
\end{flalign}
The curvature $R_\dd$ of the connection $\dd$ is the $\bbK$-linear map defined by
\begin{flalign}
R_\nabla:=\nabla\circ\nabla : V\to V\otimes_A\Omega^2~.
\end{flalign}
It is a standard proof to show that it is a right $A$-linear map, i.e.,~$R_\nabla\in\Hom_A(V,V\otimes_A\Omega^2)$.
\sk
If we have a quasitriangular Hopf algebra $(H,\RR)$, and
$A\in\AAAlg{H\!\!}{}{}$\,,\; $V,\,W\in \MMMod{H\!\!\!}{A}{A}$ are
quasi-commutative, and if
$\bigl(\Omega^\bullet,\wedge,\dif\bigr)$ is a graded quasi-commutative
left $H$-covariant differential calculus, then we can consider the 
sum of two connections $\dd_V\in \Con_A(V)$ and $\dd_W\in\Con_A(W)$.
The corresponding  curvature
$R_{\nabla_V\oplus_\RR\nabla_W}
\in\Hom_A(V\otimes_AW,V\otimes_AW\otimes_A\Omega^2)$ can be shown to satisfy the identity
 \eqa\label{eqn:curvaturesum}
 R_{\nabla_V\oplus_\RR\nabla_W}&\!\!\!=&\!\! \tau^{-1}_{\RR\,23}\circ (R_{\nabla_V}\otimes_\RR \id_W \big)
 + \id_V \otimes_\RR R_{\nabla_W} \\
&& +\,\, (\id_{V\otimes_A W}\otimes_\RR\wedge)\circ \tau^{-1}_{\RR\,23} \circ \Big( \nabla_V \otimes_\RR \nabla_W - (\oR^\al\RA\nabla_V) \otimes_\RR (\oR_\al\RA \nabla_W )\Big)~,\nn
 \ena
 where $R_{\nabla_V}\in\Hom_A(V,V\otimes_A\Omega^2)$ and $R_{\nabla_W}\in\Hom_A(W,W\otimes_A\Omega^2)$  are the curvatures of $\nabla_V$ and $\nabla_W$, respectively.
 The second line in (\ref{eqn:curvaturesum}) is a right $A$-linear map
 even though the single addends are not.  

\sk

We remark that in  case one of the two connections is $H$-equivariant, then
the second line in (\ref{eqn:curvaturesum})  vanishes, and (\ref{eqn:curvaturesum}) shows that
the curvature
of the sum of connections is the sum of the curvatures of the initial connections.


\subsection{Curvature of twisted connections and twisted curvatures}
Let $H$ be a Hopf algebra with twist $\FF\in H\otimes H$ and let 
$A\in\AAAlg{H\!\!}{}{}$, $V\in \MMMod{H\!\!\!}{}{A}$, $\dd\in
\Con_A(V)$. We twist these structures to
$A_\st\in\AAAlg{H^\FF\!\!\!\!}{}{}$, $V_\st\in \MMMod{H^\FF\!\!\!\!}{}{A_\st}$, $\dd_\st=\widetilde{D}_\FF(\dd)\in
\Con_{A_\st}(V_\st)$. The isomorphism $\widetilde D_\FF$ between $\Con_A(V)$ and
$\Con_{A_\st}(V_\st)$ can be shown to lift to an isomorphism between extended connections 
$\dd : V\otimes_A \Omega^\bullet\longrightarrow V\otimes_A\Omega^\bullet$ and
$\dd_\st : V_\st\otimes_{A_\st} \Omega_\st^\bullet\longrightarrow
V_\st\otimes_{A_\st}\Omega_\st^\bullet$, we have
\eq
\dd_\st=\varphi_{V,\Omega^\bullet}^{-1}\circ D_\FF(\dd)\circ\varphi_{V,\Omega^\bullet}~.
\en
We can then express the curvature $R_{\dd_\st}$ of the quantized
connection $\dd_\st={\widetilde{D}_\FF(\dd)}$ in terms of the original
connection $\dd$.
We have (use (\ref{DPSTQDPDQ}))
\begin{flalign}
R_{\nabla_\st} := \nabla_\st\circ \nabla_\st 
=\varphi_{V,\Omega^2}^{-1}\circ
D_\FF(\dd)\circ\varphi_{V,\Omega}\circ \widetilde{D}_\FF(\dd)=
\varphi^{-1}_{V_\st,\Omega^2}\circ D_\FF\big(\nabla\circ_\st
\nabla\big) 
= \widetilde{D}_\FF\big(\nabla\circ_\st \nabla\big)~.
\end{flalign}
Notice that the quantized curvature $\widetilde
D_\FF(R_\dd)=\widetilde D_\FF(\dd\circ\dd)$ 
differs from the curvature of the quantized connection
$R_{{\widetilde{D}_\FF(\dd)}}= \widetilde{D}_\FF\big(\nabla\circ_\st
\nabla\big)$, hence flat connections are in general not mapped
into flat connections. The study of the cohomology of
twisted connections that are flat could lead to new cohomology
invariants or interesting combinations of undeformed ones.

\sk


\section*{Acknowledgements}
It is a pleasure to acknowledge the warm hospitality and the fruitful
discussions experienced at the JW2011 workshop of the Balkan Summer
Institute 2011. Fruitful conversations are acknowledged with
Ugo Bruzzo and Walter van Suijlekom.
The hospitality of CERN Theory Unit where the
present work has been completed is also gratefully acknowledged.

This work is in part supported by the exchange grant 2646
of the  ESF Activity Quantum Geometry and Quantum Gravity, by the
Deutsche Forschungsgmeinschaft through the Research Training Group GRK
1147 Theoretical Astrophysics and Particle Physics 
and by the ERC Advanced Grant no. 226455, Supersymmetry, Quantum
Gravity and Gauge Fields (SUPERFIELDS). 

\bibliographystyle{utphys}
\bibliography{Aschieribibl}


\end{document}